\definecolor{linkcolor}{rgb}{0.5,0.0,0.0}
\definecolor{citecolor}{rgb}{0.0,0.5,0.0}
\definecolor{urlcolor} {rgb}{0.0,0.0,0.5}
\pgfplotsset{compat=1.8}
\theoremstyle{plain}
\newtheorem{lemma}{Lemma}
\newtheorem{theorem}{Theorem}
\newtheorem{proposition}{Proposition}
\newtheorem{corollary}{Corollary}
\theoremstyle{definition}
\newtheorem{definition}{Definition}
\newtheorem{example}{Example}
\theoremstyle{remark}
\newtheorem{remark}{Remark}
\newcommand{\projcl}{\ensuremath{\mathfrak{P}}}
\newcommand{\projstr}{\ensuremath{\mathcal{P}}}
\newcommand{\solSp}{\ensuremath{\mathcal{M}}}
\newcommand{\lie}{\ensuremath{\mathcal{L}}}
\begin{document}

\title[Projectively equivalent superintegrable systems]{Projectively equivalent 2-dimensional superintegrable systems\\ with projective symmetries}
\author{Andreas Vollmer}
\email{andreasdvollmer@gmail.com}
\email{a.vollmer@unsw.edu.au}
\address{School of Mathematics and Statistics, University of New South Wales, Sydney NSW 2052, Australia}

\begin{abstract}
This paper combines two classical theories, namely metric projective differential geometry and superintegrability.
We study superintegrable systems on 2-dimensional geometries that share the same geodesics, viewed as unparametrized curves.
We give a definition of projective equivalence of such systems, which may be considered the projective analog of (conformal) St\"ackel equivalence (coupling constant metamorphosis).
Then, we discuss the transformation behavior for projectively equivalent superintegrable systems and find that the potential on a projectively equivalent geometry can be reconstructed from a characteristic vector field.
Moreover, potentials of projectively equivalent Hamiltonians follow a linear superimposition rule.
The techniques are applied to several examples. In particular, we use them to classify, up to St\"ackel equivalence, the superintegrable systems on geometries with one, non-trivial projective symmetry.
\end{abstract}

\maketitle

\begin{center}
 \scriptsize
 \vskip-2\baselineskip
 keywords: geodesic equivalence, projective connections, superintegrable systems, projective symmetry\smallskip
 
 MSC2010: 53A20, 53B10, 70H99, 70G45, 14H70
\end{center}
\vskip\baselineskip

\section{Introduction}\label{sec:introduction}
Let $(M,g)$ be a (pseudo-)Riemannian manifold of dimension~2. A (parametrized) geodesic $\gamma=\gamma(t)$ is a curve on $M$ that satisfies the equation
\begin{equation}\label{eqn:geodesic}
  \nabla_{\dot\gamma}\dot\gamma = 0
\end{equation}
where $\nabla$ is the Levi-Civita connection of $g$.
Note that~\eqref{eqn:geodesic} requires the geodesic to be parametrized in a certain way. If we release this requirement, and reparametrize using a new parameter $s=s(t)$, Equation~\eqref{eqn:geodesic} becomes
\begin{equation*}
 \nabla_{\gamma'}\gamma' = \chi\,\gamma'
\end{equation*}
where we denote derivatives w.r.t.~$s$ by $'$, and where the coefficient function is given by $\chi=g(\gamma',\nabla\ln\dot s)$.
We investigate geometries that share the same geodesics up to their parametrization.
\begin{definition}
Two pseudo-Riemannian metrics are \emph{projectively} (aka \emph{geodesically}) \emph{equivalent} if their Levi-Civita connections give rise to the same geodesics, viewed as unparametrized curves.
\end{definition}

\noindent This paper is devoted to superintegrable systems on geometries that are projectively equivalent in this sense (a proper definition is going to be given in Section~\ref{sec:equivalent.systems}).
A superintegrable system admits a maximal number of independent integrals of motion, i.e.\ of (smooth) functions $f:TM\to\mathbb{R}$ that are preserved along geodesics, $f(\gamma(t),\dot\gamma(t))=\text{const}$.
Integrals of motion, and their properties under geodesic transformations, have been the subject of a number of classical works in mathematics, going back at least to the middle of the 19th century, e.g.~\cite{joachimsthal_1846,raabe_1846,painleve_1894,levi-civita_1896,liouville_1889,darboux_1901} to name just a few.
Interest in these objects has increased in recent years, with advances, for instance, in the area of metrizability \cite{matveev_2012rel,eastwood_2008,bryant_2008}, an invariant description of Killing tensors \cite{gover_2018}, invariants \cite{fels_1995}, and many more.

\subsection{Superintegrable systems}
In the current section, superintegrable systems are going to be introduced.
We are going to restrict to what is known as \emph{second order maximally superintegrable} systems, i.e.\ we require the integrals to be quadratic polynomials in the momenta.
More precisely, let $H:T^*\!M\to\mathbb{R}$, $H=g^{ij}p_ip_j+V$, be the natural Hamiltonian of the metric $g$ endowed with the \emph{potential} $V:M\to\mathbb{R}$.
We denote momenta on the cotangent space $T^*\!M$ by $p$, and velocities on the tangent space $TM$ by $\xi$.
For the free Hamiltonian we write $G:T^*\!M\to\mathds{R}$, $G=g(\xi,\xi)=g_{ij}\xi^i\xi^j=g^{ij}p_ip_j$, which is a polynomial of second order in the momenta or velocities, respectively (note that here, as in the remainder of this paper, the Einstein summation convention is used).
Integrals of motion $I:T^*\!M\to\mathbb{R}$ are characterized, in Hamiltonian mechanics, by the vanishing of their Poisson bracket with the Hamiltonian, i.e.
\begin{equation}\label{eqn:poisson}
 \{H,I\} = \frac{\partial H}{\partial x^i}\frac{\partial I}{\partial p_i}
        -\frac{\partial H}{\partial p_i}\frac{\partial I}{\partial x^i}
 = 0\,.
\end{equation}
It is a well-known fact that, provided the Hamiltonian has the above form, that an integral which is a quadratic polynomial in momenta, may without loss of generality be taken to have the form~\eqref{eqn:integrals}, see e.g.~\cite{hietarinta_1984}.
\begin{definition}
A superintegrable system is a pseudo-Riemannian manifold of dimension $n$ together with $2n-1$ functionally independent integrals of motion, whereof one is the Hamiltonian $H=G+V$, $G=g^{ij}p_ip_j$. The other $2n-2$ integrals of motion have the form
\begin{equation}\label{eqn:integrals}
 I^{(\alpha)} = K^{ij}_{(\alpha)}p_ip_j + W^{(\alpha)}\qquad \alpha\in\{1,2,\dots,2n-2\}\,,
\end{equation}
where $K^{ij}_{(\alpha)}$ are components of a (2,0)-tensor field $K_{(\alpha)}$, and $W^{(\alpha)}\in C^\infty(M)$ is a function on the underlying manifold.
\end{definition}

\noindent The condition~\eqref{eqn:poisson}, for each $I^{(\alpha)}$, is a polynomial in the momenta. If we decompose them, for each value of $\alpha$, according to the (polynomial) degree in the momenta, we obtain a system of equations
\begin{equation}\label{eqn:poisson.split}
 \{G,K^{ij}_{(\alpha)}p_ip_j\} = 0\,,\qquad
 \{G,V\}+\{W^{(\alpha)},K^{ij}_{(\alpha)}p_ip_j\} = 0\,,\qquad
 \alpha\in\{1,2,\dots,2n-2\}\,.
\end{equation}
Let us denote by $K_{ij}^{(\alpha)}=g_{ia}g_{jb}K^{ab}_{(\alpha)}$ the components of the (0,2)-tensor field $K^{(\alpha)}$ corresponding to $K_{(\alpha)}$. Then the first of the conditions~\eqref{eqn:poisson.split} corresponds to the requirement that the tensor field \smash{$K^{(\alpha)}=K_{ij}^{(\alpha)}dx^idx^j$} is a Killing tensor, i.e.
\begin{equation}\label{eqn:killing}
 \nabla_XK^{(\alpha)}(X,X)=0
\end{equation}
for any tangent vector field $X$.
Similarly, we may introduce an endomorphism
\begin{equation*}
  \mathsf{K}^{(\alpha)}:T^*\!M\to T^*\!M\,,
  \qquad
  \mathsf{K}^{(\alpha)}(\omega)=K\indices{^{(\alpha)}_i^j}\omega_j\,dx^i\,.
\end{equation*}
and in these terms the second requirement of~\eqref{eqn:poisson.split} gives an expression for the differential $dW^{(\alpha)}$. Its integrability condition is
\begin{equation}\label{eqn:bertrand.darboux}
  d\mathsf{K}^{(\alpha)}dV = 0\,,
\end{equation}
i.e.\ $d(K^{(\alpha)}_{ia}\nabla^aV)=0$.
Equation~\eqref{eqn:bertrand.darboux} is known as \emph{Bertrand-Darboux equation} \cite{bertrand_1857,darboux_1901}.
If $V\ne0$, we require the full system~\eqref{eqn:poisson.split}, and particularly~\eqref{eqn:bertrand.darboux}, to hold. If $V=0$, we deal with a free Hamiltonian system and only need to account for~\eqref{eqn:killing}.

\begin{remark}[Non-degeneracy]\label{rmk:non.degeneracy}
 A superintegrable metric may admit an entire linear family of potentials that are compatible, via~\eqref{eqn:bertrand.darboux}, with the same space of Killing tensor fields. Particularly, if a 2-dimensional metric admits a superintegrable Hamiltonian with the potential being a $n+2=4$ dimensional vector space of potentials, the superintegrable system is said to be \emph{non-degenerate}. Otherwise, it is called \emph{degenerate}.
\end{remark}

Two famous superintegrable systems are the Kepler-Coulomb system and the Harmonic Oscillator, which both have major significance in many areas of science, ranging from atomic physics and materials science to celestial mechanics and quantum theory.
For instance, in classical celestial mechanics, the Kepler 2-body problem (planetary motion around a central body) is superintegrable and solvable by quadrature due to the existence of the Runge-Lenz vector. Due to angular momentum conservation it can be reduced to a 2-dimensional problem. In quantum mechanics, the corresponding problem is the determination of the energy level structure of the Hydrogen atom, which also emphasizes the close link between superintegrability and separation of variables~\cite{kalnins_2018}. Examples also include, for instance, oscillations in crystalls and metals and the Calogero-Moser model.

Second-order maximally superintegrable systems have been classified in dimension 2 and 3, see \cite{kalnins_2018} and references therein.
Particularly, in dimension~2, superintegrable systems are classified in terms of normal forms, see \cite{kalnins_2001,kalnins-II,kalnins_2018}, and at least for the Euclidean case, the algebraic geometry underlying non-degenerate systems is understood \cite{kalnins_2007,kress_2019}. For the 3-dimensional case, see~\cite{capel_2014} for instance.
%
Second order superintegrable systems admit a notable equivalence transformation, known as St\"ackel transformations or coupling constant metamorphosis. Both names are interchangeable for our context, but in general the transformations are not identical~\cite{post_2010}).
St\"ackel transformations have a major significance in superintegrability, providing an equivalence relation on second-order superintegrable systems \cite{kalnins-II,kalnins_2011,kress_2007}.
They also play an important role in the classification of superintegrable systems.
The relevant theory is concisely summarized in the literature, e.g.\ \cite{kress_2007,kalnins_2018}, and we therefore mention only the key points relevant to our purposes in this paper.
\begin{definition}[St\"ackel transformation]
 Consider a Hamiltonian $H=H_0+\epsilon V_0$ with coupling parameter $\varepsilon$, admitting an integral of motion $L=L_0+\epsilon W_0$ and satisfying
 \[
  \{H_0,L_0\}=0=\{H,L\}\,.
 \]
 Then, the St\"ackel transformed objects
 \[
  H'=\frac{H_0}{V_0}
  \qquad\text{and}\qquad
  L'=L_0-W_0H'
 \]
 satisfy $\{H',L'\}=0$.
\end{definition}
\noindent The St\"ackel transform establishes an equivalence relation on second-order maximally superintegrable systems, see e.g.\ \cite{kalnins-II}.
In fact, it is often beneficial to consider superintegrable systems up to St\"ackel equivalence, for instance in~\cite{kalnins_2013} and particularly in the classification of superintegrable systems, e.g.\ \cite{capel_2014,capel_2015}.
Let us mention some properties of St\"ackel transformations that are going to be important in what follows:
\begin{enumerate}
 \item A superintegrable system in dimension 2 is given by the Hamiltonian $H$ and two integrals $I^{(1)},I^{(2)}$.
 Denote their (non-vanishing) Poisson bracket by $R=\{I^{(1)},I^{(2)}\}$.
 It is proven in \cite{kalnins-I,kalnins-II} that $R^2$ is a cubic polynomial in $H,I^{(1)},I^{(2)}$.
 \item It is easily recognized that the cubic $R^2$ is not canonical, as we are free to replace $I^{(1)},I^{(2)}$ by linear combinations, including with the Hamiltonian and constant terms,
 \begin{align*}
 \hat{R}^2
 &= \{ a_1 I^{(1)} + b_1 I^{(2)} + c_1 H + d_1 , a_2 I^{(1)} + b_2 I^{(2)} + c_2 H + d_2 \}^2 \\
 &=(a_1b_2-a_2b_1)^2\,\{I^{(1)},I^{(2)}\}^2
 =(a_1b_2-a_2b_1)^2\,R^2\,.
 \end{align*}
 This ambiguity can be addressed by resorting to normal forms.
 Such normal forms are obtained in~\cite{kress_2007}, and we may put the cubic $R^2$ into one of the forms of Table~\ref{tab:staeckel}.
 \item Once in normal form, the St\"ackel type of a 2-dimensional superintegrable system (i.e.\ its equivalence class under St\"ackel equivalence) can be determined from the cubic $R^2$ \cite{kalnins-II,kress_2007}.
 The relevant terms of $R^2$ are the terms cubic and quadratic in the integrals $I^{(1)},I^{(2)}$. 
 This is based on the following observation: Let us split up the cubic $R^2$ according to the polynomial degree in $I^{(1)},I^{(2)}$. 
 Then, as shown in~\cite{kress_2007}, the leading term w.r.t.\ $I^{(1)}$ and $I^{(2)}$ is preserved under St\"ackel transformation, and for the part quadratic in $I^{(1)},I^{(2)}$ at least the form is preserved (the functions $f(H,c_i)$ can be changed by adding multiples of the Hamiltonian, or constants, to the integrals $I^{(1)},I^{(2)}$ etc.).
\end{enumerate}
Exploiting these three properties, we are going to determine the St\"ackel type of the superintegrable systems in Section~\ref{sec:main.example}.

\begin{table}
\begin{center}
 \begin{tabular}{crll}
  \toprule
  St\"ackel type & \multicolumn{3}{c}{Normal Form of $R^2$} \\
  \midrule
  \rowcolor{gray!10}
  (111,11) & $I^{(1)}I^{(2)}(I^{(1)}+I^{(2)})$ & $+f(H,c_i)\,I^{(1)}I^{(2)}$ & $+\mathcal{O}$ \\
  \midrule
  \rowcolor{gray!10}
  (21,2) & $(I^{(1)})^2I^{(2)}$ & $+f(H,c_i)\,(I^{(2)})^2$ & $+\mathcal{O}$ \\
  \rowcolor{gray!10}
  (21,0) & $(I^{(1)})^2I^{(2)}$ & & $+\mathcal{O}$ \\
  \midrule
  \rowcolor{gray!10}
  (3,11) & $(I^{(1)})^3$ & $+f(H,c_i)\,I^{(1)}I^{(2)}$ & $+\mathcal{O}$ \\
  (3,2) & $(I^{(1)})^3$ & $+f(H,c_i)\,(I^{(2)})^2$  & $+\mathcal{O}$ \\
  (3,0) & $(I^{(1)})^3$ & & $+\mathcal{O}$ \\ 
  \midrule
  (0,11) & & $f(H,c_i)\,I^{(1)}I^{(2)}$ & $+\mathcal{O}$ \\
  \bottomrule
 \end{tabular}
 \medskip
 
 \caption{The normal forms of St\"ackel types in dimension~2, as established in \cite{kress_2007}. The first column gives the St\"ackel type, while the second column shows the normal form into which $R^2$ can be cast. The symbol $\mathcal{O}$ indicates additional, lower degree terms; the coefficients $f(H,c_i)$ are linear polynomials in $H,c_i$ with constant coefficients. The constants $c_i$ denote the parameters of the non-degenerate potential (cf.\ Remark~\ref{rmk:non.degeneracy}). Shaded rows highlight classes that are realized in the example considered in Section~\ref{sec:main.example}.}\label{tab:staeckel}
\end{center}
\end{table}

\subsection{Structure of the paper}
The paper is organized as follows.
Section~\ref{sec:projective.equivalence} recalls the theory of projective equivalence and metrizability of projective connections. Next, in Section~\ref{sec:equivalent.systems}, we define what it means for two superintegrable systems to be projectively equivalent. Given a pair of projectively equivalent metrics $g,\tilde{g}$, we then explain how a superintegrable system admitted by the metric~$g$ can be transformed into one admitted by the projectively equivalent metric~$\tilde{g}$, and how these systems give rise to an ``addition'' of these superintegrable systems.

While the proofs turn out not to be too hard, we are going to find that the resulting techniques provide useful tools in the study of projectively equivalent systems. Examples are given in Section~\ref{sec:examples}.
The main application, however, is found in Section~\ref{sec:main.example}: We classify metrics with one, essential projective symmetry, up to St\"ackel equivalence. The paper is concluded with a remark on the interrelation of the appearing St\"ackel classes in Section~\ref{sec:contractions}.

\section{Brief review of projective differential geometry}\label{sec:projective.equivalence}
We begin with a short review of projective differential geometry, which has undergone some significant activity in recent years, see \cite{bryant_2009,mettler_2014a,mettler_2014b,gover_2018}, for instance. In particular, Lie's Problem of classifying 2-dimensional geometries with projective symmetries has been solved \cite{bryant_2008,matveev_2012,manno_2018}.
In dimension~2, there is also a close relationship with integrability~\cite{topalov_2003,bolsinov_2003,gover_2018}, which we are going to come back to in Section~\ref{sec:equivalent.systems}.

But for now let us begin with the following natural question: To what extend is it possible to reconstruct a geometry from the knowledge of its (unparametrized) geodesics?\nocite{matveev_2012rel}
Let $\nabla$ be the Levi-Civita connection of $g$. A projectively equivalent connection $\nabla'\sim\nabla$ satisfies
\begin{equation}
 \nabla'_aX^j = \nabla_aX^j + \Upsilon_aX^j + \Upsilon_cX^c\,g_a^j\,,
\end{equation}
for some 1-form $\Upsilon$,
and admits the same geodesic curves (disregarding their parametrization).
We denote the \emph{projective structure}, i.e.\ the collection of all connections projectively equivalent to $\nabla$ by $\projstr=[\nabla]$.
The projective class of a connection is encoded in its \emph{Thomas symbols}, which are given from the Christoffel symbols $\Gamma\indices{^k_{ij}}$ of $\nabla$ by the formula~\cite{thomas_1925,thomas_1926}
\begin{equation*}
 \Pi\indices{^k_{ij}} = \Gamma\indices{^k_{ij}} - \frac{1}{n+1}\,\delta^i_j\,\Gamma\indices{^p_{pk}} - \frac{1}{n+1}\,\delta^i_k\,\Gamma\indices{^p_{pj}}\,.
\end{equation*}
The Thomas symbols determine the projective structure. In dimension~2 they can be encoded in the so-called projective connection, a second-order ordinary differential equation (obtained from~\eqref{eqn:geodesic} by eliminating the external parameter)
\begin{equation}\label{eqn:projective.connection.obtained}
 y''(x)
 = -\Gamma^2_{11} +(\Gamma^1_{11}-2\Gamma^2_{12})\,y'(x)
 -(\Gamma^2_{22}-2\Gamma^1_{12})\,y'(x)^2 +\Gamma^1_{22}\,y'(x)^3,
\end{equation}
whose solutions describe geodesic curves (up to reparametrizations).
In particular, a connection $\nabla$ might come from a metric $g$ by way of the Levi-Civita connection $\nabla^g$. This is the situation that we assume in what follows.
The projective classes that we consider here can always be realized by the Levi-Civita connection of a metric~$g$.
What is even more, we assume that there are several such realizations that are essentially different (in a sense to be specified in Proposition~\ref{prop:metrization.equations} and Definition~\ref{def:metrization.space}).
\begin{definition}\label{def:metrizable.structure}
 We say that a projective structure $\projstr$ is \emph{metrizable} if there exists a metric $g$ such that $\projstr=[\nabla^g]$ where $\nabla^g$ is the Levi-Civita metric of $g$.
\end{definition}

\noindent The metric~$g$ in Definition~\ref{def:metrizable.structure} is never unique, if it exists.
Indeed, if a projective class $\projstr$ satisfies $\projstr=[\nabla^g]$ for~$g$, then any metric $\lambda g$, $\lambda\in\mathds{R}$, has the same projective structure $\projstr=[\nabla]=[\nabla^g]$.
Other, non-trivial examples might also exist, and the projective classes considered here actually admit many such realizations.

\begin{definition}
 For a metric $g$, the collection of all metrics projectively equivalent to it is called its \emph{projective class}, denoted $\projcl(g)$.
\end{definition}

\begin{remark}[Metrizability Problem]
 If two metrics belong to the same projective structure, their Thomas symbols~$\Pi\indices{^k_{ij}}$ coincide, if both metrics are expressed in the same coordinates.
 Asking whether a given projective connection~\eqref{eqn:projective.connection.obtained} represents a metrizable projective structure is referred to as the \emph{metrizability problem}.
 Let us prescribe a specific one,
  \begin{equation}\label{eqn:projective.connection.prescribed}
   y''(x) = f_0 +f_1\,y'(x) + f_2\,y'(x)^2 +f_3\,y'(x)^3\,,
  \end{equation}
  where the $f_i$ are functions. In this case the metrization problem corresponds to a system of partial differential equations on the components of the metric~$g$. It is obtained by equating the coefficients of~\eqref{eqn:projective.connection.prescribed} to~\eqref{eqn:projective.connection.obtained}.
  This system of partial differential equations is highly non-linear. However, it is well known within projective differential geometry that this non-linear system can be rewritten in linear form \cite{eastwood_2008,bryant_2008}.
\end{remark}

\noindent The metrizability problem can be turned into a system of linear partial differential equations by a suitable replacement of the unknowns. More specifically, we need to introduce weighted tensor sections.
\begin{definition}
 A \emph{(p,q)-tensor field of weight $k$} is a section in the bundle
 \[
  T^{(p,q)}M\otimes(\mathrm{vol}(M))^{\frac{k}{n+1}}
 \]
\end{definition} 
\noindent Here, $\mathrm{vol}(M)$ is the bundle of positive volume form (this presupposes that we fix an orientation, which we may, because we work locally). Also, assuming a positively oriented basis $(x^1,\dots,x^n)$, we may write $\Omega\in\mathrm{vol}(M)$ as $\Omega=f(x)\,dx^1\wedge\dots\wedge dx^n$, and can therefore think of $\Omega$ as a function, which we are going to make use of in the following. A more proper introduction to weighted tensor fields can be found in Section~2.2 of~\cite{matveev_2018}, see also~\cite{eastwood_2008,gover_2018}.
Particularly, we are going to work with (0,2)-tensor sections in $(\mathrm{vol}(M))^{\frac{k}{n+1}}\otimes T^{(0,2)}M$, where $T^{(0,2)}M=S^2T^*\!M$ denotes the symmetric (0,2)-tensors. The weight~$k$ has to be chosen suitably.
\begin{proposition}[\cite{bryant_2008,eastwood_2008}]\label{prop:metrization.equations}
 The metrizability problem, i.e.\ the condition that~\eqref{eqn:projective.connection.prescribed} is realized by~\eqref{eqn:projective.connection.obtained}, can be expressed as a system of linear partial differential equations on components of weighted tensor section $\beta$ in \smash{$(\mathrm{vol}(M))^{\frac43}\otimes S^2T^*\!M$}, which are given by
 \begin{equation}\label{eqn:a.g}
  \Psi : g\mapsto \beta\,,\qquad
  \beta_{ij} = |\det(g)|^{-\frac23}\,g_{ij}\,.
 \end{equation}
 The metrizability equations then read \cite{bryant_2008,liouville_1889}
 \begin{subequations}\label{eqn:metrizability.eqns}
 \begin{align}
    \beta_{11x} - \frac23\,f_1\,\beta_{11}+2f_0\,\beta_{12} &= 0
    \\
    \beta_{11y} + 2\beta_{12x} -\frac43\,f_2\,\beta_{11} +\frac23\,f_1\,\beta_{12}+2f_0\,\beta_{22} &= 0
    \\
    2\beta_{12y} + \beta_{22x} -2f_3\,\beta_{11} -\frac23\,f_2\,\beta_{12}+\frac43\,f_1\,\beta_{22} &= 0
    \\
    \beta_{22y} - 2f_3\,\beta_{12}+\frac23\,f_2\,\beta_{22} &= 0
 \end{align}
 \end{subequations}
\end{proposition}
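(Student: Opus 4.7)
\medskip
\noindent\textbf{Proof plan.} The plan is to derive the system by direct calculation, starting from the observation that comparing~\eqref{eqn:projective.connection.obtained} with~\eqref{eqn:projective.connection.prescribed} yields exactly four scalar constraints on the Christoffel symbols of~$g$:
\begin{equation*}
 \Gamma^2_{11}=-f_0,\qquad \Gamma^1_{11}-2\Gamma^2_{12}=f_1,\qquad 2\Gamma^1_{12}-\Gamma^2_{22}=f_2,\qquad \Gamma^1_{22}=f_3.
\end{equation*}
These are nonlinear in the components of~$g$, because the $\Gamma^k_{ij}$ involve both $g_{ij}$ and $g^{ij}$. I would match the four constraints above with the four equations~(\ref{eqn:metrizability.eqns}a)--(\ref{eqn:metrizability.eqns}d) one by one, showing that each of the latter is a rescaling (by a common conformal factor) of the corresponding constraint once the change of variable $g\mapsto\beta$ has been performed.

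The key ingredient is the standard identity $\partial_k\ln|\det g|=2\,\Gamma^l_{lk}$, together with metric compatibility $\partial_k g_{ij}=g_{il}\Gamma^l_{jk}+g_{jl}\Gamma^l_{ik}$. Writing $\rho=|\det g|^{-2/3}$, so that $\beta_{ij}=\rho\,g_{ij}$, I compute
\begin{equation*}
 \partial_k\beta_{ij}
 = \rho\bigl(\partial_k g_{ij}-\tfrac{2}{3}(2\Gamma^l_{lk})\,g_{ij}\bigr)
 = \rho\bigl(g_{il}\Gamma^l_{jk}+g_{jl}\Gamma^l_{ik}-\tfrac{4}{3}\Gamma^l_{lk}\,g_{ij}\bigr).
\end{equation*}
Every term on the right is linear in~$\beta$ with coefficients linear in the Christoffel symbols, so the nonlinearity in~$g$ has been absorbed into the conformal weight. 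The weight $k=4$ (equivalently, the exponent $-2/3$ of $|\det g|$) is chosen precisely so that the contribution of $\partial_k\ln|\det g|$ cancels against the trace parts of the Christoffel symbols in the targeted combinations below.

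Next, I substitute this expression for $\partial_k\beta_{ij}$ into each of (\ref{eqn:metrizability.eqns}a)--(\ref{eqn:metrizability.eqns}d) and collect terms. For instance, for~(\ref{eqn:metrizability.eqns}a) one obtains, after dividing by the overall factor~$\rho$, a linear combination of the $g_{ij}\Gamma^k_{lm}$ in which the coefficient of $g_{11}$ is proportional to $\Gamma^1_{11}-2\Gamma^2_{12}-f_1$ and that of $g_{12}$ is proportional to $\Gamma^2_{11}+f_0$, so the equation is equivalent to these two projective constraints. The analogous (slightly longer) calculation for~(\ref{eqn:metrizability.eqns}b) and~(\ref{eqn:metrizability.eqns}c) couples the middle two constraints, while~(\ref{eqn:metrizability.eqns}d) produces the remaining two. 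Once all four equations are verified, the system~(\ref{eqn:metrizability.eqns}) is established as being equivalent to the original (nonlinear) metrizability equations, and linearity in~$\beta$ is manifest from the derivation.

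The main obstacle is purely bookkeeping: checking that the specific numerical coefficients $\tfrac{2}{3}$, $\tfrac{4}{3}$, $2$ appearing in~(\ref{eqn:metrizability.eqns}) come out correctly from the calculation above, which is dimension-dependent (the exponent $-2/3$ is tied to $n=2$ via the weight $\tfrac{k}{n+1}=\tfrac{4}{3}$). No conceptual difficulty arises beyond this verification; the algebraic miracle that makes the result work is the identity $\partial_k\ln|\det g|=2\Gamma^l_{lk}$, which is what allows a judiciously chosen density weight to absorb the nonlinearity. Alternative routes via Thomas symbols or the tractor calculus of~\cite{eastwood_2008,bryant_2008} would give the same system coordinate-invariantly, but for the two-dimensional statement needed here the direct calculation is the most economical.
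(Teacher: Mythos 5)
Your overall route is sound, and since the paper itself does not prove this proposition (it imports it from \cite{bryant_2008,eastwood_2008}, classically \cite{liouville_1889}), the right comparison is with the classical derivation in those references, which your direct computation essentially reproduces. Your key formula is correct: with $\rho=|\det g|^{-2/3}$, the identity $\partial_k\ln|\det g|=2\Gamma^l_{lk}$ and metric compatibility give $\partial_k\beta_{ij}=\rho\bigl(g_{il}\Gamma^l_{jk}+g_{jl}\Gamma^l_{ik}-\tfrac43\Gamma^l_{lk}\,g_{ij}\bigr)$, and substituting into~\eqref{eqn:metrizability.eqns} does work out as you predict. Writing $A=\Gamma^2_{11}+f_0$, $B=\Gamma^1_{11}-2\Gamma^2_{12}-f_1$, $C=2\Gamma^1_{12}-\Gamma^2_{22}-f_2$, $D=\Gamma^1_{22}-f_3$, the four equations of~\eqref{eqn:metrizability.eqns} become, in order,
\begin{equation*}
 \rho\bigl(2g_{12}A+\tfrac23 g_{11}B\bigr)=0,\quad
 \rho\bigl(2g_{22}A-\tfrac23 g_{12}B+\tfrac43 g_{11}C\bigr)=0,\quad
 \rho\bigl(-\tfrac43 g_{22}B+\tfrac23 g_{12}C+2g_{11}D\bigr)=0,\quad
 \rho\bigl(-\tfrac23 g_{22}C+2g_{12}D\bigr)=0,
\end{equation*}
which confirms your coefficient bookkeeping and establishes the forward direction: any metric realizing~\eqref{eqn:projective.connection.prescribed} yields a solution $\beta=\Psi(g)$ of the linear system.

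There is, however, one genuine imprecision: your claim that the first equation ``is equivalent to these two projective constraints'' ($A=0$ and $B=0$) is false for a single equation — it is one linear relation between $A$ and $B$, which can vanish without either constraint holding. The equivalence is true only at the level of the full system, and closing it requires an argument your plan omits: the $4\times4$ coefficient matrix of $(A,B,C,D)$ read off from the display above has determinant $-\tfrac{16}{9}(\det g)^2$, which is nonzero precisely when $g$ is nondegenerate, so the four equations jointly force $A=B=C=D=0$. (For completeness one should also record that $\Psi$ is invertible on nondegenerate sections — in dimension two, $g_{ij}=|\det\beta|^{-2}\beta_{ij}$ — so that solving the linear system for nondegenerate $\beta$ really is the metrizability problem.) With these two additions your derivation is complete; everything else, including the choice of weight absorbing the nonlinearity, is correct as stated.
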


\begin{remark}
 There is a second, alternative convention that turns the metrizability problem into a linear system of differential equations. Instead of $\beta$, we can use a section $\sigma$ in $(\mathrm{vol}(M))^{\frac23}\otimes S^2T^*\!M$, defined by
 \begin{equation}\label{eqn:sigma.g}
  \Phi : g \mapsto \sigma\,,\qquad
  \sigma^{ij} = |\det(g)|^{\frac13}\,g^{ij}\,.
 \end{equation}
 Both conventions can be used interchangeably (in dimension~2), since $\beta$ and $\sigma$ (in matrix representation) are simply matrix duals.
 We use the convention also adopted by~\cite{bryant_2008,matveev_2012}.
\end{remark}

\begin{definition}\label{def:metrization.space}
 The linear space of solutions to the system~\eqref{eqn:metrizability.eqns} is called the \emph{metrization space}~$\solSp$. The dimension of this space is called the \emph{degree of mobility} of the projective structure (and of any underlying metric).
\end{definition}
\noindent The metrization space~$\solSp$ contains, via~\eqref{eqn:a.g}, the metrics projectively equivalent to~$g$. However, this is not a 1-to-1 correspondence, and in fact $\Psi(\projcl)\subsetneq\solSp$, as for instance $0\in\solSp$ clearly does not correspond to a metric.
There is an interconnection between constant eigenvalues of Benenti tensors (i.e., special conformal Killing tensors) and points in $\solSp$ that do (not) correspond to metrics \cite{bolsinov_2003,manno_2018ben}.

The examples discussed in Sections~\ref{sec:examples} and~\ref{sec:main.example} have in common that they admit \emph{projective vector fields}, i.e.\ vector fields whose flow preserves geodesics up to reparametrization.
\begin{definition}
 A projective transformation is a (local) diffeomorphism of $M$ that sends (unparametrized) geodesics into (unparametrized) geodesics. An (infinitesimal) \emph{projective symmetry} is a vector field (up to multiplication by a non-zero constant) whose (local) flow acts by projective transformations.
\end{definition}
\noindent In particular, if we say that a metric admits one projective symmetry, this means that all projective vector fields are linearly dependent.
Metrics that admit one or several projective symmetries have been classified in~\cite{bryant_2008} and~\cite{matveev_2012,manno_2018}, respectively.
The simplest example of projective symmetries are homothetic vector fields, i.e.\ vector fields~$X$ that preserve a metric~$g$ up to a constant,
$\lie_Xg = \lambda g$, with $\lambda\in\mathbb{R}$.
Particularly, if we assume $\lambda=0$, this includes Killing vector fields.
A projective symmetry that is not homothetic is said to be \emph{essential} (or \emph{non-trivial}).

\section{Projectively equivalent superintegrable systems}\label{sec:equivalent.systems}
In this section, we introduce the concept of projective equivalence of second order superintegrable systems and explore some of its major properties. In particular, we will obtain an addition operation on projectively equivalent systems.
Broadly speaking, while St\"ackel transforms (i.e.\ conformal transformations of superintegrable systems) are reasonably well understood (see Section~\ref{sec:introduction}, much less is known about the projective geometry underlying superintegrability.
Superintegrable systems whose underlying geometries are projectively equivalent have, however, been the subject in some recent papers, for instance~\cite{bryant_2008,matveev_2012,manno_2018,manno_2019}. These references discuss a particular class of systems without potential. Reference~\cite{valent_2016}, on the other hand, studies (Darboux-)K{\oe}nigs systems with potential, from a global perspective. Our approach is local and includes a potential, while retaining a high degree of generality.

Let $g$ be a metric with potential $V$ and natural Hamiltonian $H=g^{ij}p_ip_j+V$.
For reasons that will become clear later on, let us introduce the following vector field\footnote{In the context of St\"ackel transforms, similarly to~\eqref{eqn:U}, it is possible to introduce a weighted scalar potential by $v=|\det(g)|^{\frac12}V$, which has similar properties as $U$.}.
\begin{definition}\label{def:U}
 The weighted vector field $U\in (\mathrm{vol}(M))^{\frac43}\otimes TM$,
 \begin{equation}\label{eqn:U}
  U[H] = |\det(g)|^{\frac23}\,\mathrm{grad}_g(V)\,,
 \end{equation} 
 is going to be referred to as the \emph{projective vector potential} of the natural Hamiltonian $H=g^{ij}p_ip_j+V$.
\end{definition}

\noindent If the Hamiltonian, from which $U$ is computed, is clear, we shall sometimes drop the mention of $H$, writing simply $U=U[H]$.

\begin{theorem}\label{thm:potential.transformation}
 Let~$g$ be a metric with potential $V$ and natural Hamiltonian $H=g^{ij}p_ip_j+V$.
 Furthermore, let~$\tilde{g}$ be a metric projectively equivalent to~$g$.
 Then
 \[
  W = \left|\frac{\det(g)}{\det(\tilde{g})}\right|^{\frac23}\,\mathrm{grad}_g(V)
 \]
 is a 0-weight vector field, $W\in TM$, and is the gradient (w.r.t.~$\tilde{g}$) of a function $\tilde{V}$,
 $W=\mathrm{grad}_{\tilde g}\tilde{V}$.
\end{theorem}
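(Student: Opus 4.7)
The plan is to verify the weight claim and the gradient claim in turn. The weight claim is pure density bookkeeping: both $|\det g|^{2/3}$ and $|\det \tilde g|^{2/3}$ are sections of $(\mathrm{vol}(M))^{4/3}$, matching the weight of the projective vector potential $U[H]$ (Definition~\ref{def:U}). Their quotient $F := |\det g/\det \tilde g|^{2/3}$ is therefore a genuine function, and multiplying it by the vector field $\mathrm{grad}_g V$ produces the weight-zero vector field $W \in TM$.

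For the gradient claim, I would introduce the $1$-form
\[
\omega_i := \tilde g_{ij} W^j = F\,\tilde g_{ij}\,g^{jk}\,V_{,k}\,.
\]
Showing $d\omega = 0$ suffices, since by Poincar\'e's lemma this yields a local primitive $\tilde V$ with $\omega = d\tilde V$, and then $W = \tilde g^{-1}\omega = \mathrm{grad}_{\tilde g} \tilde V$.

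The core step is to recognize $K_{ij} := F\,\tilde g_{ij}$ as a Killing tensor of $g$. This is where projective equivalence is used substantively: by Proposition~\ref{prop:metrization.equations}, the weighted section $\Psi(\tilde g)$ lies in $\solSp$, i.e.\ satisfies the linear metrizability equations~\eqref{eqn:metrizability.eqns} attached to the common projective structure. A standard construction (unwinding the density conventions of $\Psi$) then produces a Killing tensor of $g$ out of any element of $\solSp$, and in our case this yields precisely $K_{ij} = F\,\tilde g_{ij}$. A direct index computation gives $\omega_i = (\mathsf{K}\,dV)_i$ for the endomorphism $\mathsf{K}$ associated to $K$, and the Bertrand-Darboux equation~\eqref{eqn:bertrand.darboux} applied to the compatible pair $(K, V)$ of the superintegrable system on $g$ forces $d\omega = 0$.

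The Killing-tensor identification is the only non-routine ingredient; the rest is index manipulation plus Poincar\'e's lemma. A more computational alternative would be to write both $g$ and $\tilde g$ in Liouville--Dini normal form—which is always possible locally for projectively equivalent $2$-dimensional metrics—and to verify closedness of $\omega$ by explicit calculation in those coordinates. However, the $\solSp$-based argument is more intrinsic and makes clearer that the result really hinges on the linear structure of the metrizability equations together with the Bertrand-Darboux integrability of the superintegrable system.
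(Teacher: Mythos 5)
Your proposal is correct and takes essentially the same route as the paper's proof: you lower the index of $W$ with $\tilde g$, identify the resulting 1-form as $\mathsf{K}\,dV$ for the Killing tensor $K_{ij}=\left|\det(g)/\det(\tilde g)\right|^{\frac23}\tilde g_{ij}$ (which the paper packages as the Benenti tensor $L\indices{_i^j}$), and conclude closedness from the Bertrand--Darboux equation. Your additional steps---justifying the Killing property of $K$ via $\Psi(\tilde g)\in\solSp$ and the canonical isomorphism, and invoking Poincar\'e's lemma explicitly---only make explicit what the paper leaves implicit, including the shared standing assumption that $V$ is Bertrand--Darboux compatible with $K$.
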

\begin{proof}
 Using the Benenti tensor $L\indices{_i^j}=\left|\frac{\det(g)}{\det(\tilde{g})}\right|^{\frac23}\tilde{g}_{ia}g^{aj}$,
 and square brackets to denote antisymmetrization,
 \[
  (dW^\flat)_{ij}
  = \tilde\nabla_{[i} \left( \tilde{g}_{j]a}W^a \right)
  = \tilde\nabla_{[i} \left( \tilde{g}_{j]a}g^{ab}W_b \right)
  = \tilde\nabla_{[i} \left( L\indices{_{j]}^b} V_b \right)\,.
 \]
 One then quickly verifies that the last expression is exactly the Bertrand-Darboux equation for the Killing tensor $K_{ij}dx^idx^j$ (w.r.t.\ the metric $g$)
 \begin{equation}\label{eqn:dW.flat}
  (dW^\flat)_{ij} = d\mathsf{K}dV = 0\,,
 \end{equation}
 where $\mathsf{K}=K_{ia}g^{aj}$ has had an index raised using the metric $g$.
 We have also used the musical isomorphism $\flat:TM\to T^*\!M$ w.r.t.\ the metric $\tilde{g}$.
\end{proof}

\noindent We come back to Equation~\eqref{eqn:dW.flat} in Section~\ref{sec:reformulate.bertrand.darboux}, where we reformulate it in terms of $\beta\in\solSp$ and the projective vector potential~$U$.

\begin{definition}\label{def:related.Hamiltonians}
 Let $g$ and $\tilde{g}$ be two projectively equivalent metrics with natural Hamiltonians $H=g^{ij}p_ip_j+V$ and $\tilde{H}=\tilde{g}^{ij}p_ip_j+\tilde{V}$.
 We say that the Hamiltonians $H_1$ and $H_2$ are \emph{projectively related} if $U[H]$ and $U[\tilde{H}]$ are equal up to a constant factor, $U[\tilde{H}]\dot=U[H]$.
\end{definition}
\noindent Here, we have introduced the notation $\dot=$ to denote equivalence up to a constant factor, $U_1\dot= U \Leftrightarrow U_1=cU_2$ with $c\ne0$.
The proof of Theorem~\ref{thm:potential.transformation} immediately yields also the following.
\begin{proposition}\label{prop:transformation.Killing.tensors}
 Let $g$ and $\tilde{g}$ be two projectively equivalent metrics with natural Hamiltonians $H=g^{ij}p_ip_j+V$ and $\tilde{H}=\tilde{g}^{ij}p_ip_j+\tilde{V}$.
 If $H$ admits the quadratic integral $F=K^{ij}p_ip_j+W$, then $\tilde{H}$ admits the Killing tensor
 \[
  \tilde{K} = \left(\frac{\det(\tilde{g})}{\det(g)}\right)^{\frac23}\,K
 \]
 The corresponding potential remains unchanged, i.e.\ the Hamiltonian $\tilde{H}$ admits the integral of motion
 \begin{equation*}
  \tilde{F} = \tilde{K}^{ij}p_ip_j+W\,.
 \end{equation*}
\end{proposition}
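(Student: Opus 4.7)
The plan is to mirror the two-step structure of the proof of Theorem~\ref{thm:potential.transformation}: first verify that $\tilde K$ is a Killing tensor for $\tilde g$, then verify that the Bertrand--Darboux integrability condition produces the \emph{same} scalar $W$ as the potential for~$\tilde F$.

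For the Killing-tensor property, the cleanest route I would take is via the weighted-tensor viewpoint of Proposition~\ref{prop:metrization.equations}. The object $\hat K_{ij}:=|\det(g)|^{-2/3}K_{ij}$ is a section of $(\mathrm{vol}(M))^{4/3}\otimes S^2T^*\!M$ of the same type as $\beta=\Psi(g)$, and in dimension~2 the Killing equation on $K$ admits a projectively invariant reformulation on $\hat K$ that depends only on the projective class $\projstr=[\nabla^g]=[\nabla^{\tilde g}]$. Re-reading the same weighted section through $\tilde g$ then yields a Killing tensor $\tilde K$ for $\tilde g$ with the prescribed proportionality $\tilde K_{ij} = |\det(\tilde g)|^{2/3}\hat K_{ij} = (\det(\tilde g)/\det(g))^{2/3}K_{ij}$. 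A parallel, more pedestrian route is available: the projective deformation formula from Section~\ref{sec:projective.equivalence}, namely $\nabla^{\tilde g}-\nabla^g = \delta\otimes\Upsilon+\Upsilon\otimes\delta$ with $\Upsilon\propto d\log|\det(\tilde g)/\det(g)|$, allows one to verify the Killing equation for $\tilde K$ by direct substitution.

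For the potential, I would reuse the calculation appearing in the proof of Theorem~\ref{thm:potential.transformation}, with the Benenti tensor replaced by $K$. By Theorem~\ref{thm:potential.transformation}, the transformed potential satisfies $\mathrm{grad}_{\tilde g}\tilde V = |\det(g)/\det(\tilde g)|^{2/3}\mathrm{grad}_g V$. Expanding
\[
  (\mathsf{\tilde K}\,d\tilde V)_i = \tilde K_{ia}\,\tilde g^{ab}\tilde V_b = \tilde K_{ia}\,(\mathrm{grad}_{\tilde g}\tilde V)^a
\]
and substituting both the formula $\tilde K_{ia} = (\det(\tilde g)/\det(g))^{2/3}K_{ia}$ and the above identity for $\mathrm{grad}_{\tilde g}\tilde V$, the determinantal weights cancel exactly, leaving $K_{ia}g^{ab}V_b = (\mathsf K\,dV)_i = dW_i$. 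Hence the Bertrand--Darboux equation~\eqref{eqn:bertrand.darboux} for the new system is satisfied with the same scalar potential~$W$, as claimed.

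The main obstacle is the Killing property in step one: the weighted-tensor argument is conceptually cleanest but tacitly invokes the projective invariance of a certain second-order differential operator in dimension~2, while the direct Christoffel-based verification is elementary but a little tedious. Once the Killing property is secured, the matching of the scalar potential reduces to the cancellation of determinant factors carried out above.
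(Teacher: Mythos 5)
Your proposal is correct and takes essentially the same route as the paper: the paper likewise handles the quadratic part by identifying it with the projectively invariant weighted section $\beta_{ij}=|\det g|^{-\frac23}K_{ij}\in\solSp$ (citing the classical Painlev\'e/Topalov result rather than re-deriving the invariance), and its computation $W_k=\beta_{mk}U^m$ in \eqref{eqn:WqU} is precisely your determinant-weight cancellation showing $\tilde{\mathsf{K}}\,d\tilde V=\mathsf{K}\,dV=dW$. One cosmetic slip only: the projectively invariant system \eqref{eqn:metrizability.eqns} underlying your step one is first-order in the weighted unknown, not second-order.
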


The proof is given in Section~\ref{sec:proof.integrals.transformation}.

\begin{remark}
 For free Hamiltonians (without potential), Proposition~\ref{prop:transformation.Killing.tensors} is given in~\cite{painleve_1894}, see also~\cite{topalov_2003} and references therein.
 Proposition~\ref{prop:transformation.Killing.tensors} reflects the projective equivalence of the Killing equation, see~\cite{bryant_2008} and references therein. In~\cite{bryant_2008}, the corollary is stated for free Hamiltonians and the isomorphism $K\mapsto\tilde{K}$ is referred to as the canonical isomorphism.
\end{remark}

\noindent Definitions~\ref{def:U} and~\ref{def:related.Hamiltonians} together with Theorem~\ref{thm:potential.transformation} and Proposition~\ref{prop:transformation.Killing.tensors} establish a projective equivalence of superintegrable systems.
Indeed, let $H=g^{ij}p_ip_j+V$ be a superintegrable Hamiltonian. Then, for a metric $\tilde{g}$ projectively equivalent to $g$, we can construct a potential $\tilde{V}$ corresponding to $V$ by requiring $U[\tilde{H}]=U[H]$ for the transformed Hamiltonian $\tilde{H}=\tilde{g}^{ij}p_ip_j+\tilde{V}$.
Therefore, we arrive at the following.
\begin{definition}\label{def:equivalent.systems}
 Let $H=g^{ij}p_ip_j+V$ and $\tilde{H}=\tilde{g}^{ij}p_ip_j+\tilde{V}$ be two superintegrable Hamiltonians that are projectively related.
 If $U[\tilde{H}]=U[H]$, then we say that $H$ and $\tilde H$ give rise to \emph{projectively equivalent superintegrable systems}.
 For two projectively equivalent metrics $g,g'$ we say that their superintegrable systems are \emph{projectively equivalent} if they have the same projective potential $U$.
\end{definition}

\begin{remark}
It is easily verified that this is indeed an equivalence relation. In fact, reflexivity, symmetry and transitivity are straightforwardly confirmed.\smallskip

\noindent In Definition~\ref{def:equivalent.systems}, we require equality in $U[\tilde{H}]=U[H]$. In Definition~\ref{def:related.Hamiltonians}, on the other hand, we have only equality up to a constant factor, $U[\tilde{H}]\dot=U[H]$. While the second is a priori a weaker requirement, it turns out that both are very similar criteria. The reason is that if the metric $g$ admits the potential $V$, the Hamiltonian already admits the family $cV$ of potentials. Indeed, if the Bertrand-Darboux Equation~\eqref{eqn:bertrand.darboux} is satisfied for a potential $V$ (and a family of Killing tensors $K$), it is also satisfied for any constant multiple of $V$ (and the same family of Killing tensors). However, the scalar part of the integrals of motion will transform accordingly (see Example~\ref{ex:trivial.case} below).
In what follows, adopting a common convention from the theory of superintegrability, we are often going to speak of the potential $V$, when indeed we have an entire family of such potentials in mind. In this case, the equivalence in Definition~\ref{def:equivalent.systems} becomes, effectively, equivalence up to a constant factor (up to renaming the parameters). Therefore, in such situations, Definitions~\ref{def:related.Hamiltonians} and~\ref{def:equivalent.systems} differ only in the requirement of superintegrability.
\end{remark}

\begin{example}[trivial transformations]\label{ex:trivial.case}
 Let $H=g^{ij}p_ip_j+V$ be a natural Hamiltonian arising from a metric~$g$. Then for any $\lambda\ne0$ ($\lambda\in\mathbb{R}$) the metric $g'=\lambda g$ is projectively equivalent to~$g$ and gives rise to a family of natural Hamiltonians $H'_{\lambda,\mu}=G'+V'_\mu$ with $G'=g'_{ij}\xi^i\xi^j$ and a family of potentials $V'_\mu=\mu V$.
 For any choice of $\lambda,\mu$ the Hamiltonian~$H_{\lambda,\mu}$ is equivalent to~$H$.
 This is easily verified as
 \[
  U[H'] = \lambda^{\frac43}\mu\,U[H]\,.
 \]
 If the Hamiltonian $H$ admits the integrals $J^{(\alpha)}=K^{(\alpha)}_{ij}\xi^i\xi^j+W^{(\alpha)}$ ($\alpha=1,\dots,2n-1$), then the transformed Hamiltonian~$H'$ admits the integrals
 \smash{$I^{(\alpha)}=K^{(\alpha)}_{ij}\xi^i\xi^j+\mu W^{(\alpha)}$}.
\end{example}

\noindent For brevity, we sometimes use the following abbreviation: We say that two potentials $V^{(1)}$ and $V^{(2)}$ are projectively equivalent, if the corresponding Hamiltonians are, for which we assume that the underlying metrics are clear.

\begin{remark}
 Note that the Hamiltonians admitted by two projectively equivalent metrics are, in general, not projectively equivalent.
 For instance, the flat generic system and the (isotropic) harmonic oscillator are not projectively equivalent.
 The metric, for both systems, is $g=dx^2+dy^2$, and the (non-degnerate) potentials are given by
 \begin{align*}
  V_\text{gen} &=\omega^2(x^2+y^2)+\frac{a}{x^2}+\frac{b}{y^2}+c
  && \text{The generic system} \\
  V_\text{osc} &=\omega^2(x^2+y^2)+a x+b y+c
  && \text{The isotropic harmonic oscillator.}
 \end{align*}
 The claim is easily checked, first verifying that both these potentials are compatible with $\partial_x^2$ and $\partial_y^2$, but that the third compatible Killing tensor is, respectively,
 \[
  K_\text{gen} = (y\,dx -x\,dy)^2
  \qquad\text{or}\qquad
  K_\text{osc} = dxdy\,.
 \]
\end{remark}

\begin{remark}
 Note that the equivalence $U[H_1]=U[H_2]$ in the definition is to be understood as an equivalence of the respective families of admissible potentials, such that
 \[
  V_\text{osc'} = \omega^2(x^2+y^2)+a'(x+y) +b'(x-y) +c
 \]
 and $V_\text{osc}$ give rise to equivalent (and actually coinciding) superintegrable systems.
\end{remark}

\noindent Having introduced the notion of projective equivalence of second order superintegrable systems, let us now turn our attention towards a different, but related, problem.
While so far, we have been concerned with how to transform one given superintegrable system into another, we are now going to assume that we are already provided with a pair\footnote{For simplicity, we restrict to a pair of two metrics from which we construct the family of superintegrable systems. Analogously, one might define the addition for more than two given superintegrable Hamiltonians. In Section~\ref{sec:main.example} we indeed use Hamiltonians that are defined from a basis of the metrizability space.} of projectively equivalent, (second-order) superintegrable Hamiltonians $H_1,H_2$.
Let us denote the underlying metrics by $g_1$ resp.\ $g_2$ (we assume they are non-proportional projectively equivalent metrics), and the potentials by $V_1,V_2$.
Then, any metrics of the form
\begin{equation}\label{eqn:g.t}
  g_t = \frac{
          \frac{g_1}{\det(g_1)^{2/3}}+t\,\frac{g_2}{\det(g_2)^{2/3}}
       }{
          \det\left(
            \frac{g_1}{\det(g_1)^{2/3}}+t\,\frac{g_2}{\det(g_2)^{2/3}}
          \right)^2
       }\,.
\end{equation}
is projectively equivalent to $g_1$ and $g_2$.
Formula~\eqref{eqn:g.t} is highly non-linear, and a priori we should expect the same for the potential.
However, due to the linearizability of the metrizability equations, we actually obtain a linear formula (the proof is given in Section~\ref{sec:proof.addition.potential}).
\begin{theorem}\label{thm:addition}
 Let $g_1,g_2$ be projectively equivalent, linearly independent metrics that give rise to projectively related superintegrable natural Hamiltonians $H_1,H_2$ (with potentials $V^{(1)},V^{(2)}$.
 Then the family~\eqref{eqn:g.t} of metrics gives rise to a family of superintegrable systems with potentials
 \begin{equation*}
  V_t = V^{(1)}+tV^{(2)}\,.
 \end{equation*}
\end{theorem}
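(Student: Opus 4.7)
The plan is to recast everything in the language of solutions $\beta\in\solSp$ of the metrizability equations and of the projective vector potential~$U$, in which the addition statement becomes manifestly linear. First, the system~\eqref{eqn:metrizability.eqns} is homogeneous linear in $\beta$ with coefficients depending only on the projective structure shared by $g_1$ and $g_2$; so setting $\beta_\alpha := \Psi(g_\alpha)$, the combination $\beta_t := \beta_1 + t\beta_2$ again lies in $\solSp$ for every $t\in\mathds{R}$. A brief inversion of~\eqref{eqn:a.g} in dimension two yields $\Psi^{-1}(\beta) = \beta/\det(\beta)^2$, and substituting $\beta_t$ reproduces the formula~\eqref{eqn:g.t}; hence $g_t = \Psi^{-1}(\beta_t)$ is projectively equivalent to both $g_1$ and $g_2$.

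The key step is the following linearization of the relation between~$V$ and~$U$. A direct calculation from~\eqref{eqn:a.g} shows $|\det g|^{2/3}g^{ij} = (\beta^{-1})^{ij}$ in matrix form, so the defining relation~\eqref{eqn:U} inverts to
\[
\partial_j V \;=\; \beta_{jk}\,U^k.
\]
Since $H_1$ and $H_2$ are projectively related, after rescaling $V^{(2)}$ (cf.\ Example~\ref{ex:trivial.case}) I may assume $U[H_1] = U[H_2] =: U$. Applying the identity separately to $V^{(1)}$ and $V^{(2)}$ and summing,
\[
\partial_j\bigl(V^{(1)} + t V^{(2)}\bigr) \;=\; (\beta_1 + t\beta_2)_{jk}\,U^k \;=\; (\beta_t)_{jk}\,U^k,
\]
which is exactly the relation characterizing the potential on $g_t$ that has projective vector potential~$U$, in the sense of Theorem~\ref{thm:potential.transformation}. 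Hence $V_t := V^{(1)} + tV^{(2)}$ coincides, up to an irrelevant additive constant, with the $\tilde V$ produced by that theorem.

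Finally, superintegrability of $H_t = g_t^{ij}p_ip_j + V_t$ follows from Proposition~\ref{prop:transformation.Killing.tensors}: every quadratic integral of $H_1$ transports to an integral of $H_t$ with Killing tensor $K_t = (\det g_t/\det g_1)^{2/3}K_1$ and unchanged scalar part $W^{(\alpha)}$, and the Bertrand--Darboux equation on $g_t$ holds for $V_t$ precisely because it holds for $V^{(1)}$ on $g_1$. I expect the only conceptually nontrivial step to be the matrix identity $\partial_j V = \beta_{jk}U^k$; once this is in hand, the result reduces to the linearity of the two governing linear systems~\eqref{eqn:metrizability.eqns} and~\eqref{eqn:bertrand.darboux}.
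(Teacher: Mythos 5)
Your proposal is correct and follows essentially the same route as the paper's own proof in Section~\ref{sec:proof.addition.potential}: your key identity $\partial_j V=\beta_{jk}U^k$ is exactly the paper's relation~\eqref{eqn:dV.qU}, with linearity in $\beta$ doing all the work and integrability guaranteed by the invariant Bertrand--Darboux equation~\eqref{eqn:invariant.BD}. The extra steps you spell out --- the inversion $\Psi^{-1}(\beta)=\beta/\det(\beta)^2$ recovering~\eqref{eqn:g.t}, the normalization $U[H_1]=U[H_2]$ via rescaling, and the transport of integrals via Proposition~\ref{prop:transformation.Killing.tensors} --- are all consistent with how the paper handles these points elsewhere.
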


\noindent Next, by the same token, let $g_1,\dots,g_m$ be projectively equivalent, linearly independent metrics that define projectively equivalent superintegrable systems with potentials $V^{(1)},\dots,V^{(m)}$. Then they define a family of superintegrable systems on the metrics analogous to~\eqref{eqn:g.t},
\begin{equation}\label{eqn:g.t.arbitrary}
  g[t_1,t_2,\dots,t_m] =
       \frac{
          \sum_it_i\frac{g_i}{\det(g_i)^{2/3}}
       }{
          \det\left(
            \sum_it_i\frac{g_i}{\det(g_i)^{2/3}}
          \right)^2
       }\,.
\end{equation}
The resulting metric admits the potential
$V[t_1,t_2,\dots,t_m] = \sum_i t_i V^{(i)}$.

Giving the linearity of~\eqref{eqn:metrizability.eqns}, we can thus define an ``addition'' of superintegrable systems as follows.
Let us denote by $S_i=(g_i,V^{(i)})$ the system with metric $g_i$ and potential $V^{(i)}$, for $i\in\{1,2,\dots,r\}$ with $r\in\mathds{N}$.
Let $\beta_i=\Psi(g_i)$ for each $i$.
Then we define, for constants $t_i\in\mathds{R}$,
\begin{equation}\label{eqn:addition.systems}
  \sum_i t_i S_i := \left(\Psi^{-1}\left(\sum_i t_i \beta_i\right),\sum_i t_i V^{(i)}\right)\,.
\end{equation}

\begin{remark}
In the literature, addition theorems for superintegrable systems are studied, e.g., in~\cite{tsiganov_2008,tsiganov_2009}. However, these addition theorems do not seem to have any apparent link to the additive property discussed here.
\end{remark}

\subsection{Proof of Proposition~\ref{prop:transformation.Killing.tensors}}\label{sec:proof.integrals.transformation}
Proving Proposition~\ref{prop:transformation.Killing.tensors}, we also obtain an intrinsic motivation for Definition~\ref{def:U}.
For free Hamiltonians (without potential) the Proposition already appears in~\cite{painleve_1894}, see~\cite{topalov_2003} for a more modern formulation. Using this classical result, consider a metric $g$ which admits a projectively equivalent metric $\hat{g}$. The following integral of motion is admitted by~$\hat{g}$ (indices of~$\hat{g}$ are raised using $g$)
\begin{equation}\label{eqn:integral.g.hat}
 F_{\hat{g}} = \underbrace{\det(g)^{\frac23}\,\frac{\hat{g}^{ij}}{\det(\hat{g})^{\frac23}}}_{=K^{ij}_{(\alpha)}}\,p_ip_j + W\,.
\end{equation}
The integral has to satisfy~\eqref{eqn:poisson}.
Therefore, the quadratic part of the integral of motion will be still given by solutions of the metrization equations, i.e.\ elements of $\solSp$. Thus it remains to study the latter equation in~\eqref{eqn:poisson.split},
\begin{equation*}
\{K^{ij}_{(\alpha)}p_ip_j,V\}+\{G,W^{(\alpha)}\} = 0\,.
\end{equation*}
Using the weighted tensor fields $\beta^{(\alpha)}$, corresponding to $F^{(\alpha)}$ via~\eqref{eqn:a.g} and~\eqref{eqn:integral.g.hat}, we have
\begin{equation}\label{eqn:transformation.Killing}
 K^{ij}_{(\alpha)}p_ip_j=\det(g)^{\frac23}\,\beta^{(\alpha)}_{kl}g^{ki}g^{lj}p_ip_j
\end{equation}
and we thus obtain from~\eqref{eqn:poisson.split}
\begin{equation}\label{eqn:W.k}
 W^{(\alpha)}_k = (\det g)^{\frac23}\,g^{im}g^{jn}\beta^{(\alpha)}_{mn}\,V_{,i}\,g_{jk}
 = (\det g)^{\frac23}\,\beta^{(\alpha)}_{mk}\,V^m\,,
\end{equation}
where subscripts after a comma denote derivatives, e.g.\ $V_{,i}$ is the $i$-th component of the differential $dV$ of $V$.
An inspection of Equation~\eqref{eqn:W.k} motivates Definition~\ref{def:U}, in view of Theorem~\ref{thm:potential.transformation}.
Proposition~\ref{prop:transformation.Killing.tensors} now immediately follows from~\eqref{eqn:transformation.Killing}.



\subsection{A reformulation of the Bertrand-Darboux Equation}\label{sec:reformulate.bertrand.darboux}

If the metric is clear or irrelevant, we again denote (covariant) derivatives by comma, such that superscripts denote components of the gradient of a function, and subscript components of its differential.
%
Let $g$ be a metric on the manifold $M$ admitting the potential $V:M\to\mathds{R}$ and thus the Hamiltonian $H=g^{ij}p_ip_j+V$.
With the definition of $U$, we have the formula
\begin{equation}\label{eqn:WqU}
   W^{(\alpha)}_k = \beta^{(\alpha)}_{mk}\,U^m\,,
\end{equation}
where $U^m$ does, by definition, not depend on $\alpha$.
The Bertrand-Darboux Equation~\eqref{eqn:bertrand.darboux} thus becomes
\begin{equation}\label{eqn:invariant.BD}
  U^i\beta^{(\alpha)}_{i[j,k]}-U\indices{^i_{\,,[j}}\beta^{(\alpha)}_{k]i} = 0\qquad\forall\alpha\,.
\end{equation}

\subsection{Proof of Theorem~\ref{thm:addition}}\label{sec:proof.addition.potential}
Let us take the projective vector potential defined in~\eqref{eqn:U}. We may lower one index, but this will depend on our choice of the metric~$g$, among all metrics projectively equivalent to $g$.
However, take an integral of motion satisfying~\eqref{eqn:poisson}. We may write the differential of the scalar part $V^{(\alpha)}$ as
\begin{equation}\label{eqn:dV.qU}
 \nabla_iV^{(\alpha)} = \beta^{(\alpha)}_{ij}\,U^j\,,
\end{equation}
which is indeed independent of the choice of the metric~$g$.
In turn, we may replace the metric~$g$ by its corresponding solution of~\eqref{eqn:metrizability.eqns}, which we may denote $\beta=\Psi(g)=\sum k_\alpha \beta^{(\alpha)}$.
One straightforwardly realizes that the integrability relation for~\eqref{eqn:dV.qU} is~\eqref{eqn:invariant.BD} and therefore satisfied.

\begin{remark}
 Equation~\eqref{eqn:dV.qU} has some similarity with the following observation:
 If we are provided with a pair of St\"ackel equivalent Hamiltonians, $H_1=g_1^{ij}p_ip_j+V_1$ and $H_2=g_2^{ij}p_ip_j+V_2$, we can form the product $V_ig_i$ for $i=1,2$.
 However, due to the St\"ackel equivalence, $H_2=\phi H_1$ for some function $\phi$, and thus
 $
  V_2g_2 = (\phi V_1)\frac{g_1}{\phi} = V_1 g_1\,.
 $
 For further details see \cite{kalnins_2011,capel_2014,kalnins_2018} for instance.
\end{remark}

\section{Examples}\label{sec:examples}
We have already mentioned a few simple examples of projectively equivalent systems earlier, and will now turn our attention to more interesting ones. Our main application, however, is going to be the classification, up to St\"ackel equivalence, of superintegrable systems with one, essential projective symmetry, see the following section.
In the examples here, we are going to look at superintegrable systems on Darboux-K{\oe}nigs
\footnote{Excluding constant curvature spaces, there exist four Darboux-K{\oe}nigs systems. They are called K{\oe}nigs metrics in~\cite{valent_2016}, and Darboux metrics in~\cite{kalnins_2002,kalnins_2003}, referring to a note by G.~K{\oe}nigs \cite{koenigs} in the multi-volume tome of G.~Darboux \cite{darboux_1897}, respectively.}
metrics, which have already been studied in several papers~\cite{kalnins_2002,kalnins_2003,bryant_2008,valent_2016}.
Also, we consider systems on conformally equivalent geometries, and in particular systems that are both St\"ackel and projectively equivalent.

\subsection{Darboux-K{\oe}nigs systems}

The Darboux-K{\oe}nigs metrics are projectively equivalent~\cite{bryant_2008}.
They admit the following (degenerate) superintegrable potentials
\begin{align*}
    g_1 &= \frac{a\cos(x)+b}{\sin^2(x)}\,(dx^2\pm dy^2)
    &V_1 &= \frac{c_1}{a\cos(x)+b}+c_2 \\
    g_2 &= (ae^{-x}+be^{-2x})\,(dx^2\pm dy^2)
    &V_2 &= \frac{c_1}{ae^x+b}+c_2 \\
    g_3 &= \left( \frac{a}{x^2}+1 \right)\,(dx^2\pm dy^2)
    &V_3 &= \frac{c_1}{x^2+a}+c_2 \\
    g_4 &= x\,(dx^2\pm dy^2)
    &V_4 &= \frac{c_1}{x}+c_2
\end{align*}
Taking into account these potentials, the full natural Hamiltonians are projectively related also in the sense of Definition~\ref{def:equivalent.systems}.
This is easily seen by using the representation of Darboux-K{\oe}nigs metrics in the form given in~\cite{bryant_2008}, for which we find the Hamiltonian with potential to be
\[
     H = \frac12\,e^{3x}\,dx^2-De^x\,dy^2 + c_1e^x+c_2\,.
\]
The corresponding vector potential is
\[
     U = 2^{-\frac13}\,e^{\frac43\,x}\,\frac{c_1}{|D|^{\frac23}}\,\partial_x\,,
\]
which, for any value of $D\ne0$ is the same up to rescaling.

\subsection{Constant curvature metrics}
Consider the flat metric $g=dx^2+dy^2$. There exist 20 different superintegrable systems for this metric \cite{kalnins_2001}. The corresponding (families of) potentials are compatible with different subspaces of the space of Killing tensors and thus the Hamiltonians connected with different potentials cannot be projectively equivalent in the sense of Definition~\ref{def:equivalent.systems}.
However, systems on different constant curvature spaces can be equivalent. For instance, take the flat metric with the so-called generic potential,
\[
  H = (p_x^2+p_y^2) + \omega^2\,(x^2+y^2)+\frac{a}{x^2}+\frac{b}{y^2}+c\,.
\]
It is compatible with a 3-parameteric Killing tensor
\[
  K = C_1\,\left( y^2\,dx^2-2xy\,dxdy+x^2\,dy^2 \right) + C_2\,dx^2 + C_3\,dy^2\,.
\]
Likewise, the Hamiltonian given by the metric
\[
  g' = \frac{1}{(x^2+y^2+2)^2}\,\left((y^2+2)dx^2-2xy\,dxdy+(x^2+2)\,dy^2\right)\,,
\]
which has sectional curvature~1, and the potential
\[
  V' = \omega^2\,(x^2+y^2)+(y^2+1)\,\frac{a}{x^2}+(x^2+1)\,\frac{b}{y^2}+c
\]
is compatible with the family of Killing tensors
\[
  K' = \frac{K}{(x^2+y^2+1)^2}\,.
\]
Both have the same vector potential
\[
  U = -2\,\left( (a-\omega^2x^4)\,\frac{dx^2}{x^3} + (b-\omega^2y^4)\,\frac{dy^2}{y^3} \right)\,.
\]

\subsection{St\"ackel and projectively equivalent systems}
Let us consider a pair of projectively equivalent metrics with
\[
  g_2 = \phi\,g_1
\]
for a function~$\phi$. The corresponding potentials (Hamiltonians) shall be denoted by $V_2,V_2$ ($H_2,H_1$), respectively.
Then,
\[
  \gamma = \frac{\det(g_2)}{\det(g_1)} = \phi^2\,.
\]
This means that the potentials are related by
\[
  \nabla^{g_2}V_2 = \det(g_2)^{-\frac23} U = \gamma^{-\frac23} \nabla^{g_1}V_1 = \phi^{-\frac43} \nabla^{g_1}V_1\,,
\]
and, rewritten in terms of differentials, we have
\begin{equation}\label{eqn:dV2.projective}
  dV_2 = \phi^{-\frac13}\,dV_1\,.
\end{equation}
On the other hand, from the St\"ackel equivalence of the systems, we get $V_2=\frac{V_1}{\phi}$. Therefore, we have also
\begin{equation}\label{eqn:dV2.conformal}
  dV_2 = \frac{\phi\,dV_1 - V_1\,d\phi}{\phi^2}\,.
\end{equation}
Combining~\eqref{eqn:dV2.projective} and~\eqref{eqn:dV2.conformal}, we find the requirement
\[
  \phi(1-\phi^{\frac23})\,dV_1 = V_1\,d\phi\,,
\]
which is necessary for the Hamiltonians $H_1,H_2$ being simultaneously projective and St\"ackel equivalent.
Solving for the potential $V_1$, we obtain
\begin{equation}\label{eqn:potential.formula}
  V_1 = \frac{c\,\phi}{(1-\phi^{\frac23})^{\frac32}}\qquad c\in\mathbb{R}\,.
\end{equation}
As an interesting side remark, we observe
\begin{equation}\label{eqn:wedge.formula}
  d\phi\wedge dV_1 = d\phi\wedge dV_2 = 0\,.
\end{equation}
which alternatively we could have concluded from
\[
  0 = ddV_2
  = -\frac13\,\phi^{-\frac43}\,d\phi\wedge dV_1 + \phi^{-\frac13}\,ddV_1
  = -\frac13\,\phi^{-\frac43}\,d\phi\wedge dV_1\,.
\]

\noindent We will find (many) examples of Hamiltonians that are both St\"ackel and projectively equivalent, when we discuss our main example in Section~\ref{sec:main.example}.
However, it should be stressed that Equations~\eqref{eqn:potential.formula} and~\eqref{eqn:wedge.formula} are only necessary conditions, and not sufficient for superintegrability. This is illustrated by the following concrete example:
Let us consider the pair of projectively equivalent metrics
\begin{align*}
	 g_1 &= dx^2+dy^2 \\
	 g_2 &= \frac{dx^2+dy^2}{(1+x^2+y^2)^2}\,,
\end{align*}
meaning $\phi=(1+x^2+y^2)^{-2}$.
\begin{proposition}
 There are no potentials $V_1,V_2$ such that the Hamiltonians $H_1=g_1^{ij}p_ip_j+V_1$ and $H_2=g_2^{ij}p_ip_j+V_2$ are superintegrable and simultaneously projectively and St\"ackel equivalent.
\end{proposition}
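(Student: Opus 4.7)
The approach is to leverage the necessary condition~\eqref{eqn:potential.formula} already derived in this section: any pair $(V_1,V_2)$ that is simultaneously projectively and St\"ackel equivalent must satisfy $V_1 = c\phi/(1-\phi^{2/3})^{3/2}$, where $g_2=\phi g_1$. In the present example $\phi = (1+x^2+y^2)^{-2}$. Substituting and simplifying with $r^2 = x^2 + y^2$,
\[
   V_1 = \frac{c\,(1+r^2)^{-2}}{\bigl(1-(1+r^2)^{-4/3}\bigr)^{3/2}}
       = \frac{c}{\bigl((1+r^2)^{4/3}-1\bigr)^{3/2}} .
\]
In particular, $V_1$ depends only on $r$.

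Next, I would appeal to the classification of second-order superintegrable systems on the Euclidean plane~\cite{kalnins_2001,kalnins_2018}. The flat metric $g_1 = dx^2+dy^2$ automatically admits the Killing vector $L = xp_y - yp_x$, and hence the Killing tensor $L^2$. For $H_1 = g_1^{ij}p_ip_j+V_1$ with $V_1$ radial to be superintegrable, a third, functionally independent quadratic integral beyond $H_1$ and $L^2$ must exist. A direct analysis of the Bertrand--Darboux equation against the 6-dimensional space of flat Killing tensors (or, equivalently, inspection of the classification list) shows that the only rotationally invariant potentials on flat $\mathbb{R}^2$ supporting such an additional quadratic integral are, up to additive constants, the isotropic harmonic oscillator $V = \omega^2 r^2$ and the Kepler potential $V = -k/r$.

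Finally, I would verify that the $V_1$ above is neither of these two. This is immediate from asymptotic inspection: as $r\to 0$ one has $(1+r^2)^{4/3}-1 \sim \tfrac{4}{3}r^2$, hence $V_1 \sim c(3/4)^{3/2}r^{-3}$, while as $r\to \infty$, $V_1 \sim c\,r^{-4}$. Neither behavior matches $\omega^2 r^2$ or $-k/r$, so $V_1$ cannot belong to a second-order superintegrable system on $(\mathbb{R}^2,g_1)$, and no pair satisfying both equivalences together with superintegrability can exist. The only substantive step is the reliance on the classical fact that harmonic oscillator and Kepler exhaust the radial superintegrable potentials on the Euclidean plane; this is the main potential obstacle, and if one prefers to avoid citing the full classification it can be discharged by a short direct calculation, imposing the Bertrand--Darboux equation~\eqref{eqn:bertrand.darboux} on a general flat Killing tensor written in polar coordinates and using that $V_1$ depends only on $r$.
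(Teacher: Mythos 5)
Your proposal is correct, and its skeleton matches the paper's: both proofs feed $\phi=(1+x^2+y^2)^{-2}$ into the necessary condition~\eqref{eqn:potential.formula} to pin down an explicit rotationally invariant candidate $V_1$ (the paper gets radialness from~\eqref{eqn:wedge.formula}, you read it off the formula --- immaterial), and both then show this $V_1$ cannot be superintegrable on the flat metric. Where you diverge is the final certification: the paper verifies directly that the explicit potential is Bertrand--Darboux compatible only with the metric and the single Killing tensor $K=(y\,dx-x\,dy)^2$, i.e.\ it yields just two independent quadratic integrals, ``not sufficient for superintegrability''; you instead invoke the classical dichotomy that the only radial potentials on flat $\mathbb{R}^2$ admitting a third independent quadratic integral are $\omega^2r^2$ and $-k/r$ (up to additive constants), and exclude both by the asymptotics $V_1\sim c\,(3/4)^{3/2}r^{-3}$ as $r\to0$ and $V_1\sim c\,r^{-4}$ as $r\to\infty$, which your blow-up at the origin also settles modulo additive constants. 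Your route is more self-contained in exposition but ultimately rests on the same computation: justifying the oscillator/Kepler dichotomy requires exactly the polar-coordinate Bertrand--Darboux analysis you sketch (the translation-sector harmonics force $rf''=f'$, hence the oscillator, and the mixed sector forces Kepler, the Fourier modes in $\theta$ decoupling by linearity), which is in substance the check the paper performs on the specific $V_1$. One pedantic gap common to both proofs: the trivial branch $c=0$ of~\eqref{eqn:potential.formula} gives $V_1\equiv0$, which is excluded not by your asymptotic argument but because the St\"ackel transform requires a nonvanishing potential; a half-sentence would close this.
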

\noindent Let us now prove the claim. To this end, because of~\eqref{eqn:wedge.formula}, we have
\[
 y\frac{\partial V}{\partial x}-x\frac{\partial V}{\partial y} = 0
 \qquad\Rightarrow\qquad
 V=V(\rho)\,,\quad \rho=x^2+y^2\,.
\]
However, using~\eqref{eqn:potential.formula}, we can also explicitly integrate for the potential, i.e.\ for~\eqref{eqn:potential.formula}, which is compatible with the Killing tensor
\[
 K = (y\,dx-x\,dy)^2
\]
in addition to the metric, but this is not sufficient for superintegrability.

\section{The St\"ackel classes admitted by metrics with one, essential projective symmetry}\label{sec:main.example}
As a main application of the discussion of Section~\ref{sec:equivalent.systems}, we now establish the St\"ackel classes of metrics with one, essential projective symmetry. This extends the description, and classification (up to isometries, without potential) of such systems in~\cite{matveev_2012,manno_2018,manno_2019}.

\noindent The core questions are: \emph{What potentials are admitted by metrics that admit one, essential projective symmetry in dimension~2? Which of these are equivalent under St\"ackel transforms?}

\noindent From the literature, we can adopt the following description of metrics with one, essential projective symmetry, the space of solutions to~\eqref{eqn:metrizability.eqns} is 3-dimensional and its basis is given, via~\eqref{eqn:sigma.g} from the following three (projectively equivalent) metrics
\begin{subequations}\label{eqn:supint.generators}
\begin{align}
 g_1 &= (x+y^2)\,dxdy \\
 g_2 &= -2\frac{x+y^2}{y^3}\,dxdy+\frac{(x+y^2)^2}{y^4}\,dy^2 \\
 g_3 &= \frac{y^2+x}{(3x-y^2)^6} \left(
 9\,(y^2+x)\,dx^2
 -4y\,(9x+y^2)\,dxdy
 +12x\,(y^2+x)\,dy^2
 \right)\,.
\end{align}
\end{subequations}

\noindent To state the result, we need the following fact, which we find in the existing literature.
\begin{theorem}[\cite{matveev_2012,manno_2018,manno_2019}]\label{thm:metrics.one.essential.vf}
 The 2-dimensional (pseudo-)Riemannian metrics that admit exactly one, essential projective symmetry are projectively equivalent.
 They are parametrized, up to isometries, by points on the 2-sphere (with 6 points removed),
 Metrics that admit a second-order superintegrable system and exactly one, essential projective symmetry are, locally around almost every point, isometric to a metric $g=\Psi^{-1}(\beta)$ where
 \begin{equation}\label{eqn:matveev.system}
  \beta = \cos(\theta)\sin(\varphi)\,\beta_1
          +\cos(\theta)\cos(\varphi)\,\beta_2
          +\sin(\theta)\,\beta_3
 \end{equation}
 with $\theta\in(-\frac{\pi}{2},\frac{\pi}{2})$, $\varphi\in(0,2\pi]$, but $\varphi\not\in\{0,\frac{\pi}{2},\pi,\frac{3\pi}{2}\}$ if $\theta=0$.
 The $\beta_i$ are obtained, via Equation~\eqref{eqn:sigma.g}, from~\eqref{eqn:supint.generators}.
\end{theorem}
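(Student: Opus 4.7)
My plan is to assemble the statement from three ingredients: the classification of 2-dimensional projective structures with exactly one essential projective symmetry due to~\cite{matveev_2012,manno_2018,manno_2019}, an explicit description of the metrization space $\solSp$ associated with that structure, and the additive structure of projectively equivalent superintegrable systems established in Theorem~\ref{thm:addition}. First I would invoke the classification theorems to conclude that, up to diffeomorphism, there is a single projective structure $\projstr$ realized by metrics admitting exactly one essential projective symmetry, and that the three metrics $g_1,g_2,g_3$ displayed in~\eqref{eqn:supint.generators} all realize it.

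Next I would solve the linear system~\eqref{eqn:metrizability.eqns} for the Thomas symbols of this projective connection, computed from any one of the $g_i$ via~\eqref{eqn:projective.connection.obtained}. This is a first-order linear PDE system in the three unknowns $\beta_{ij}$, and one verifies directly that its solution space is three-dimensional with basis $\beta_i = \Psi(g_i)$; in particular the degree of mobility is $3$. Since every metric in the projective class $\projcl$ arises via $\Psi^{-1}$ from some $\beta\in\solSp$, and since $\Psi$ intertwines the rescaling $g\mapsto\lambda g$ with $\beta\mapsto\lambda^{-1/3}\beta$, metrics in the projective class are parametrized by the projective plane $\mathbb{P}(\solSp)$, which one lifts to the $2$-sphere via spherical coordinates $(\theta,\varphi)$ to obtain~\eqref{eqn:matveev.system}. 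The six points to be removed arise from the locus where $\Psi^{-1}(\beta)$ fails to yield a nondegenerate metric, i.e.\ where $\det\beta$ vanishes, together with those configurations where the dimension of the projective symmetry algebra jumps (so that the metric no longer admits \emph{exactly} one essential projective symmetry); a direct calculation with the explicit $\beta_1,\beta_2,\beta_3$ locates these as the six exceptional points described in the statement.

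For the superintegrability clause I would proceed as follows. A direct Bertrand--Darboux computation~\eqref{eqn:bertrand.darboux} verifies that each $g_i$ admits a nontrivial second-order superintegrable potential; an inspection of the resulting projective vector potentials shows $U[H_i]\dot= U[H_j]$, so that the three systems are projectively related in the sense of Definition~\ref{def:related.Hamiltonians}. Theorem~\ref{thm:addition} then furnishes a superintegrable system on every metric $\Psi^{-1}(\beta)$ with $\beta$ in the linear span, and Theorem~\ref{thm:potential.transformation} together with Proposition~\ref{prop:transformation.Killing.tensors} ensure that the Killing tensors and scalar parts transform consistently. The hardest step I anticipate is the fine analysis of the excluded sphere locus: one must check that outside those six points the tensor $\Psi^{-1}(\beta)$ is genuinely a nondegenerate metric, that no extra projective symmetry generically appears, and that the resulting integrals of motion remain functionally independent. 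The qualifier ``locally around almost every point'' in the statement indicates that these checks hold on a dense open subset where all relevant Jacobians are nonzero, which is standard for isometry classifications of this kind and can be handled by restricting to the complement of a codimension-$\geq 1$ singular set.
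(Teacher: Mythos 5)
The first thing to note is that the paper offers no proof of this statement to compare against: Theorem~\ref{thm:metrics.one.essential.vf} is imported wholesale from \cite{matveev_2012,manno_2018,manno_2019} (``we need the following fact, which we find in the existing literature''), and the only in-paper justification is the remark following its corollary, which explains \emph{how} the parametrization arises. Your outline --- defer the hard classification to the references, verify that the solution space of~\eqref{eqn:metrizability.eqns} is 3-dimensional with basis $\Psi(g_i)$, parametrize, then handle superintegrability via Lemma-type computations and Theorem~\ref{thm:addition} --- is sensible in shape, but it gets both of the mechanisms that the paper's remark actually relies on wrong, and these are substantive gaps rather than presentation issues.

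First, your reduction ``up to isometries'' via $\mathbb{P}(\solSp)$ does not work: the rescaling $g\mapsto\lambda g$ (equivalently $\beta\mapsto\lambda^{-1/3}\beta$) produces \emph{homothetic} metrics, not isometric ones, so quotienting $\solSp\setminus\{0\}$ by scalings is not an identification of isometric metrics --- and it would in any case yield $\mathbb{RP}^2$ rather than the 2-sphere of the statement (note $\beta$ and $-\beta$ correspond to $g$ and $-g$, which are in general not isometric, so antipodal points must stay distinct). The identification actually used is by the flow of the \emph{unique projective symmetry} $v$: since the flow preserves the projective structure, it acts linearly on $\solSp\cong\mathbb{R}^3$, its orbits consist of mutually isometric metrics, and the sphere is a transversal to this one-parameter action. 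Second, your account of the six removed points fails: by the remark (and \cite{manno_2019}) \emph{every} point of $\mathbb{R}^3\setminus\{0\}$ corresponds to a genuine second-order superintegrable metric, so nothing is removed because $\det\beta$ vanishes, and the dimension of the projective algebra does not jump there either. The points $\pm\beta_1,\pm\beta_2,\pm\beta_3$ are excised because exactly there the unique projective symmetry is \emph{homothetic}, hence not essential --- the axes of $\mathbb{R}^3$ are chosen as the eigendirections of the $\lie_v$-action, which is precisely what produces the restrictions on $\theta,\varphi$ (and where the transversality underlying the sphere parametrization degenerates). Your final paragraph on constructing potentials, comparing $U[H_i]$ and checking functional independence is not wrong, but it re-derives material the paper places in Theorem~\ref{thm:main.application} and Lemmas~\ref{la:generator.potentials} and~\ref{la:generating.systems}, rather than the classification clause of the quoted theorem, which rests entirely on the cited references.
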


\begin{corollary}[\cite{manno_2019}, using results from~\cite{matveev_2012,manno_2018}]
 Starting from~\eqref{eqn:matveev.system} and alternatively to $(\beta_1,\beta_2,\beta_3)$, the following basis of~$\solSp$ can be constructed:
 \begin{align*}
 \beta &= \cos(\theta)\sin(\varphi)\,\beta_1
      +\cos(\theta)\cos(\varphi)\,\beta_2
      +\sin(\theta)\,\beta_3\,,
 \\
 \bar{\beta} &=\sin(\theta)\sin(\varphi)\,\beta_1
	   +\sin(\theta)\cos(\varphi)\,\beta_2
	   -\cos(\theta)\,\beta_3\,,
 \\
 \hat{\beta} &=-\cos(\varphi)\,\beta_1+\sin(\varphi)\,\beta_2\,.
 \end{align*}
 The triple $(\beta,\bar{\beta},\hat{\beta})$ gives rise, via~\eqref{eqn:sigma.g} and~\eqref{eqn:integral.g.hat}, to a metrizable superintegrable system for the free Hamiltonian $G=g^{ij}p_ip_j$.
\end{corollary}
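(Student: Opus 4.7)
The plan is to identify the coefficient matrix defining $\beta, \bar{\beta}, \hat{\beta}$ as an element of $SO(3)$, verify that it yields a basis change within $\solSp$, and then invoke the principle that degree of mobility $3$ in dimension~2 suffices for maximal second-order superintegrability of the free Hamiltonian. Concretely, I would arrange the coefficients of $(\beta, \bar\beta, \hat\beta)$ with respect to $(\beta_1, \beta_2, \beta_3)$ as the rows of
\[
  M(\theta, \varphi) = \begin{pmatrix}
    \cos\theta\sin\varphi & \cos\theta\cos\varphi & \sin\theta \\
    \sin\theta\sin\varphi & \sin\theta\cos\varphi & -\cos\theta \\
    -\cos\varphi & \sin\varphi & 0
  \end{pmatrix}\,,
\]
and verify directly that $MM^\top = I$ with $\det M = +1$. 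Geometrically one recognizes $\bar\beta = -\partial_\theta \beta$ and $\hat\beta = -(\cos\theta)^{-1}\partial_\varphi \beta$, so the three rows are the radial direction together with the two orthonormal tangential directions at the point $(\theta,\varphi)$ of the unit sphere in $(\beta_1,\beta_2,\beta_3)$-coordinates, which makes the orthogonality essentially automatic. Since $(\beta_1, \beta_2, \beta_3)$ is a basis of~$\solSp$ by Theorem~\ref{thm:metrics.one.essential.vf}, the invertible~$M$ sends it to another basis $(\beta, \bar\beta, \hat\beta)$ of~$\solSp$.

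To extract the superintegrable system, I would use that $\beta = \Psi(g)$ corresponds to the metric $g$ itself, so the integral associated to $\beta$ via~\eqref{eqn:integral.g.hat} is $G = g^{ij}p_ip_j$. The two further basis elements $\bar\beta, \hat\beta \in \solSp$ each correspond (via~\eqref{eqn:sigma.g}) to a projectively equivalent metric, and hence, via~\eqref{eqn:integral.g.hat}, to a quadratic Killing tensor for $g$ (cf.~Proposition~\ref{prop:transformation.Killing.tensors}). Together with $G$ these yield the three integrals $(G, I^{(1)}, I^{(2)})$ required for second-order maximal superintegrability in $n=2$ dimensions. Metrizability of the underlying projective structure is automatic, since~$g$ itself provides the metrization.

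The one subtle point, which I expect to be the main obstacle, is functional independence of $(G, I^{(1)}, I^{(2)})$ on~$T^*M$. Since $(\beta, \bar\beta, \hat\beta)$ is a basis of the three-dimensional space~$\solSp$, the corresponding symmetric $(0,2)$-tensors $g, K^{(1)}, K^{(2)}$ are pointwise linearly independent on a dense open subset of~$M$. In dimension~2 they then span the entire three-dimensional space of symmetric $(0,2)$-forms at each such point, forcing the associated quadratic polynomials in momenta to be polynomially, hence functionally, independent on a dense open subset of~$T^*M$. No potential is present, so no Bertrand--Darboux condition has to be verified separately, and maximal superintegrability of the free Hamiltonian~$G$ follows.
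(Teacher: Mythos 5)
Your first two steps are sound. The matrix $M(\theta,\varphi)$ is indeed orthogonal with $\det M=+1$, and the identifications $\bar\beta=-\partial_\theta\beta$, $\hat\beta=-(\cos\theta)^{-1}\partial_\varphi\beta$ are correct, so $(\beta,\bar\beta,\hat\beta)$ is a basis of $\solSp$ whenever $(\beta_1,\beta_2,\beta_3)$ is. (The paper itself gives no proof of this corollary; it imports it from \cite{manno_2019}, and the only related argument in the paper is in the proof of Theorem~\ref{thm:main.application}, where functional independence is settled by an explicit Jacobian computation with reference to \cite{manno_2019}.) One phrasing in your middle step should be repaired rather than accepted: a general element of $\solSp$ need not come from a metric --- the paper stresses $\Psi(\projcl)\subsetneq\solSp$ --- so ``$\bar\beta,\hat\beta$ each correspond to a projectively equivalent metric'' is an overstatement; the correct (and easy) fix is that the assignment $\beta\mapsto K$ in \eqref{eqn:transformation.Killing} is linear in $\beta$ and the Killing equation is linear, so \emph{every} element of $\solSp$, degenerate or not, yields a Killing tensor of $g$.

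The genuine gap is your final step. Pointwise linear independence of $g,K^{(1)},K^{(2)}$ --- even their spanning the full three-dimensional space of symmetric $(0,2)$-forms at every point --- does \emph{not} imply polynomial, let alone functional, independence of the associated quadratic integrals. Counterexample: on flat $\mathds{R}^2$ take $G=p_1^2+p_2^2$, $I^{(1)}=p_1^2-p_2^2$, $I^{(2)}=p_1p_2$; the three Killing tensors span all symmetric forms at each point, yet $G^2=\bigl(I^{(1)}\bigr)^2+4\bigl(I^{(2)}\bigr)^2$, a polynomial relation. The structural reason your argument cannot work is that at a fixed base point the map $p\mapsto\bigl(Q_1(p),Q_2(p),Q_3(p)\bigr)$ goes from a $2$-dimensional fiber of $T^*M$ to $\mathds{R}^3$, so the momentum-derivative block of the Jacobian has rank at most $2$; rank $3$ can only be achieved through the $x$-derivatives, i.e.\ through the position dependence of the Killing tensors, which your argument never invokes. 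For the systems at hand, functional independence therefore has to be verified directly from the coordinate expressions of the three quadratic parts (this is exactly what the paper does, ``easily checked using the Jacobian, cf.~\cite{manno_2019}''), and it is here that the hypothesis of an \emph{essential} projective symmetry enters, excluding constant-coefficient degenerations of the type in the counterexample.
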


\begin{remark}
By inspection of the references~\cite{manno_2018,manno_2019} indeed any point in $\mathds{R}^3\setminus\{0\}$ corresponds to a second-order maximally superintegrable metric. Using the flow of the (unique) projective symmetry, the parametrization of Proposition~\ref{thm:metrics.one.essential.vf} is then obtained via identification of isometric metrics.
The axes in $\mathds{R}^3$ can be chosen such that they represent the metrics for which the unique projective symmetry is actually homothetic, leading to the restrictions on $\theta,\varphi$ in Proposition~\ref{thm:metrics.one.essential.vf}.
\end{remark}


\noindent In addition to the action of the isometry group (which is already accounted for in \cite{manno_2018,manno_2019}), the St\"ackel transform acts on the classification space.
We determine the orbits under this equivalence operation using the method outlined in Section~\ref{sec:introduction}.
In order to do so, we need to determine the potentials for a basis of~$\solSp$. We choose the basis $(\beta_1,\beta_2,\beta_3)$.
\begin{lemma}\label{la:generator.potentials}
The metrics~\eqref{eqn:supint.generators} admit the Hamiltonian $H^{(a)}=g_a^{ij}p_ip_j+V^{(a)}$ given, respectively, by
\begin{align*}
 S_1:\quad g_1 &= (x+y^2)dxdy
 & V^{(1)} &= \frac{c_1}{x+y^2}+\frac{c_2\,y}{x+y^2}+c_3\,\frac{y(y^2-3x)}{x+y^2}+c_4
 \\
 S_2:\quad g_2 &= -2\,\frac{x+y^2}{y^3}\,dxdy+\frac{(x+y^2)^2}{y^4}\,dy^2
 & V^{(2)} &= \frac{y}{x+y^2}\,a_1 +\frac{y^2}{x+y^2}\,a_2 -\frac{y^2(y^2-3x)}{x+y^2}\,a_3+a_4
 \\
 S_3:\quad g_3 &= \frac{y^2+x}{(3x-y^2)^6}
	\left(\star\right)
 & V^{(3)} &= \frac{y(3x-y^2)}{x+y^2}\,b_1 +\frac{(3x-y^2)^2}{x+y^2}\,b_2 +\frac{(3x-y^2)^3}{x+y^2}\,b_3 +b_4
\end{align*}
\[
 \star = 9\,(y^2+x)\,dx^2-4y\,(9x+y^2)\,dxdy+12x\,(y^2+x)\,dy^2
\]
These are non-degenerate second order maximally superintegrable systems, and therefore the scalar parts $V^{(\alpha)}$ represent the maximal possible families of potentials.
\end{lemma}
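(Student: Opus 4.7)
The plan is to treat the three systems $S_1, S_2, S_3$ uniformly by exploiting the structural fact, furnished by Theorem~\ref{thm:metrics.one.essential.vf}, that $g_1, g_2, g_3$ form a basis of the 3-dimensional metrization space~$\solSp$ of the common projective structure. This input will automatically produce the required space of Killing tensors for each underlying metric; what remains is to exhibit, for each~$a$, a 4-parameter family of compatible potentials.

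First I would assemble the Killing tensors. For fixed $a\in\{1,2,3\}$, Proposition~\ref{prop:transformation.Killing.tensors} applied to the other two basis elements produces two non-trivial Killing tensors
\[
 K^{(a)}_b \;=\; \left(\det(g_a)/\det(g_b)\right)^{2/3}\,g_b\,,\qquad b\ne a\,,
\]
which, together with $g_a$ itself, span a 3-dimensional subspace of the Killing $(0,2)$-tensor fields on $(M,g_a)$. For non-degenerate systems in the sense of Remark~\ref{rmk:non.degeneracy}, this is already the maximal dimension possible, so once a 4-parameter family of compatible potentials is exhibited, non-degeneracy will follow automatically.

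To treat the potentials, I would invoke the invariant reformulation~\eqref{eqn:invariant.BD} of the Bertrand-Darboux equation established in Section~\ref{sec:reformulate.bertrand.darboux}. For each candidate $V^{(a)}$ I compute the projective vector potential
\[
 U^{(a)} \;=\; |\det(g_a)|^{2/3}\,\mathrm{grad}_{g_a} V^{(a)}
\]
of Definition~\ref{def:U}, and then verify
\[
 (U^{(a)})^i\,\beta^{(\alpha)}_{i[j,k]}\,-\,(U^{(a)})\indices{^i_{\,,[j}}\,\beta^{(\alpha)}_{k]i} \;=\; 0
\]
for each basis element $\beta^{(\alpha)}\in\{\beta_1,\beta_2,\beta_3\}$. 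Because~\eqref{eqn:invariant.BD} is intrinsic, verifying it on a basis of $\solSp$ simultaneously certifies compatibility with the whole 3-dimensional space of Killing tensors assembled above, and therefore proves second-order maximal superintegrability of each $H^{(a)}$ with the stated potential family.

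To confirm that the listed families are maximal, I would argue that the linear system coming from the Bertrand-Darboux equation, read as constraints on an arbitrary candidate $\tilde V$ compatible with all three Killing tensors, admits exactly a four-dimensional solution space, yielding the three non-trivial terms plus the constant ($c_4$, respectively $a_4,b_4$). The main obstacle is purely computational, particularly for $g_3$, whose explicit form is cumbersome; however, working with $\beta_3$ via~\eqref{eqn:invariant.BD} rather than with $g_3$ directly keeps the algebra tractable, and the structural parallelism between the three systems provides cross-checks at each step.
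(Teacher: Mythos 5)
Your strategy is sound and, at its computational core, coincides with what the paper actually does: the paper's proof is a one-liner stating that the potentials were obtained by explicit integration of the Bertrand--Darboux equation for each of the three metrics~\eqref{eqn:supint.generators}. Your reorganization --- transporting Killing tensors from the basis $\beta_1,\beta_2,\beta_3$ of $\solSp$ via Proposition~\ref{prop:transformation.Killing.tensors}, then checking the invariant form~\eqref{eqn:invariant.BD} of Bertrand--Darboux against that basis --- is legitimate, since~\eqref{eqn:invariant.BD} is linear in $\beta$ and verifying it on a basis certifies compatibility with the whole space; and your maximality step (showing the Bertrand--Darboux constraints on an arbitrary candidate $\tilde V$ cut out exactly a four-dimensional space) is precisely the explicit integration the paper performs, so you have not avoided that computation, only relocated it. One logical slip worth fixing: non-degeneracy, per Remark~\ref{rmk:non.degeneracy}, \emph{is} the statement that the potential family is four-dimensional, so your aside that the three-dimensional Killing-tensor space is ``already maximal for non-degenerate systems'' inverts the logic; here the bound of three comes from the degree of mobility being $3$ --- the quadratic parts of integrals correspond to elements of $\solSp$ (cf.\ Section~\ref{sec:proof.integrals.transformation}) --- not from non-degeneracy, which is instead \emph{concluded} once maximality of the potential family is established.

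There is one genuine gap: the lemma asserts the systems are \emph{maximally superintegrable}, which requires $H^{(a)}$, $I^{(1)}$, $I^{(2)}$ to be functionally independent, and your proposal never checks this. Satisfying the Bertrand--Darboux equations only yields existence of the scalar parts $W^{(\alpha)}$ and hence of integrals of motion; without independence you have established integrals, not superintegrability. The repair is cheap and is the one the paper itself invokes after Lemma~\ref{la:generating.systems}: the quadratic-in-momenta parts of the three integrals correspond to the linearly independent $\beta_1,\beta_2,\beta_3$, and a direct Jacobian computation (cf.~\cite{manno_2019}) shows these quadratic parts are already functionally independent as functions on $T^*\!M$, whence so are the full integrals. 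Add that verification and your argument is complete.
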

\begin{proof}
 The claim is verified by a straightforward computation. Indeed, the expressions have been obtained by an explicit integration of the Bertrand-Darboux equation for the metrics~\eqref{eqn:supint.generators}.
\end{proof}

\noindent Consider now a generic metric~$g$, given by~\eqref{eqn:matveev.system} via~\eqref{eqn:a.g}. Solving~\eqref{eqn:bertrand.darboux} for~$g$ explicitly is, although conceptually straightforward, hard to do explicitly. Indeed, in view of the rather complicated formula~\eqref{eqn:g.t.arbitrary}, the equations turn out to be cumbersome and lengthy. However, using the techniques from Section~\ref{sec:equivalent.systems}, we are able to complete this seemingly hard problem almost trivially.
\begin{theorem}\label{thm:main.application}
 The superintegrable systems whose underlying metric admits one, essential projective symmetry are non-degenerate second-order superintegrable systems. They are parametrized by the 2-sphere except 6 exceptional points where the projective symmetry becomes homothetic.
 Explicitly, the metric~$g=\Psi^{-1}(\beta)$ from Proposition~\ref{thm:metrics.one.essential.vf}, specified by
 \begin{subequations}\label{eqn:general.superintegrable}
 \begin{equation}\label{eqn:2.sphere.beta}
  \beta = \cos(\theta)\sin(\varphi)\,\beta_1 + \cos(\theta)\cos(\varphi)\,\beta_2 + \sin(\theta)\,\beta_3\,,
 \end{equation}
 admits the potential
 \begin{equation}\label{eqn:2.sphere.potential}
  V = \cos(\theta)\sin(\varphi)\,V^{(1)} + \cos(\theta)\cos(\varphi)\,V^{(2)} + \sin(\theta)\,V^{(3)}\,.
 \end{equation}
 \end{subequations}
 The six exceptional points are $\beta\in\{\pm\beta_1,\pm\beta_2,\pm\beta_3\}$, i.e.\ if $\cos(\theta)=0$, or if $\sin(\theta)=0$ and $\sin(\varphi)\cos(\varphi)=0$.
 Up to (diffeomorphism and) St\"ackel transformations, there exist only three such superintegrable systems with essential projective symmetry. They have St\"ackel type (111,11), (21,2) or (21,0). Specifically, using the angles as in~\eqref{eqn:general.superintegrable}, the St\"ackel class is
  \begin{itemize}[align=left]
   \item[(111,11)] generically, with exception of the points satisfying
   \begin{equation}\label{eqn:stackel.nongenericity}
    \sin(\theta)
    \left(\tan(\theta) - \frac{2^{2/3}}{108}\,\frac{\sin^3(\varphi)}{\sin^2(\varphi)}\right)
    = 0
   \end{equation}
   \item[(21,2)] if $\tan(\theta) = \frac{2^{2/3}}{108}\,\frac{\sin^3(\varphi)}{\sin^2(\varphi)}$
   \item[(21,0)] if $\sin(\theta) = 0$
  \end{itemize}
  Note that these three cases exclude the points where the projective symmetry becomes homothetic. These points correspond to superintegrable systems of type (21,2) if $\beta=\pm\beta_2$ and (3,11) otherwise.
\end{theorem}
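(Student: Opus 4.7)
The plan is to reduce everything to the three generator systems of Lemma~\ref{la:generator.potentials} and then compute the St\"ackel invariants. By Theorem~\ref{thm:metrics.one.essential.vf}, every metric $g$ with exactly one essential projective symmetry corresponds (up to isometry) to a point $(\theta,\varphi)$ on the 2-sphere via $g=\Psi^{-1}(\beta)$ with $\beta$ as in \eqref{eqn:2.sphere.beta}. Applying the addition formula~\eqref{eqn:addition.systems} to the three generator systems $(g_i,V^{(i)})$ with weights $\cos(\theta)\sin(\varphi)$, $\cos(\theta)\cos(\varphi)$ and $\sin(\theta)$ immediately produces a natural Hamiltonian $H=g^{ij}p_ip_j+V$ with $V$ as in~\eqref{eqn:2.sphere.potential}. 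Non-degeneracy of this system follows from the fact that each $V^{(i)}$ contributes an independent 4-parameter family of admissible potentials; exhibiting four linearly independent compatible potentials in the additive family (obtained by activating one generator's parameters while leaving the others constant) shows that the compatible potential space of $g$ has dimension at least four, hence exactly four.

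To determine the St\"ackel type, I would construct two functionally independent quadratic integrals $I^{(1)},I^{(2)}$ of $H$. Proposition~\ref{prop:transformation.Killing.tensors} supplies the transformation rule for Killing tensors under projective change, so any Killing tensor admitted by $g_i$ transports to $g$ by multiplication with $\left(\det(g)/\det(g_i)\right)^{2/3}$; combining such transported tensors with the scalar parts obtained via~\eqref{eqn:WqU} yields the required integrals. The computationally heaviest step is then to evaluate $R^2=\{I^{(1)},I^{(2)}\}^2$ and, using the structural result of~\cite{kalnins-I,kalnins-II}, to express it as a cubic polynomial in $H,I^{(1)},I^{(2)}$. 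After bringing $R^2$ into normal form (possibly adjusting $I^{(1)},I^{(2)}$ by linear combinations involving $H$ and constants, as discussed in Section~\ref{sec:introduction}), the St\"ackel type is read off from Table~\ref{tab:staeckel}.

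The expected behaviour is as follows: generically the leading cubic part of $R^2$ factors as three distinct linear terms, producing type (111,11). The coalescence of two linear factors amounts to a discriminant-type vanishing condition on the $(\theta,\varphi)$-dependent coefficients which, after simplification, reduces to the relation~\eqref{eqn:stackel.nongenericity} and gives type (21,2). When $\sin(\theta)=0$, the contribution of $V^{(3)}$ and its Killing tensor decouples, so the portion of $R^2$ that is quadratic in $I^{(1)},I^{(2)}$ vanishes while the cubic leading term survives, yielding type (21,0). The six excluded points $\beta\in\{\pm\beta_1,\pm\beta_2,\pm\beta_3\}$, where the projective symmetry becomes homothetic, must be treated separately by direct computation on the respective generator system, giving type (21,2) for $\pm\beta_2$ and type (3,11) for $\pm\beta_1$, $\pm\beta_3$.

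The main obstacle is the algebraic complexity of computing $R^2$ explicitly as a function of $(\theta,\varphi)$. Since $g$ is given through the non-linear formula~\eqref{eqn:g.t.arbitrary} and the potential $V$ by~\eqref{eqn:2.sphere.potential}, the resulting Killing tensors and their scalar partners have involved coordinate expressions; computing the Poisson bracket and reducing modulo $H$ requires careful bookkeeping (most naturally with computer-algebra assistance). Identifying precisely which monomials in $R^2$ survive as $(\theta,\varphi)$ moves along the critical loci~\eqref{eqn:stackel.nongenericity} and $\sin(\theta)=0$ is what pins down the St\"ackel classification.
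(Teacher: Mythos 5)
Your overall route coincides with the paper's: reduce to the three generator systems of Lemma~\ref{la:generator.potentials}, obtain the potential of the generic metric by the addition rule~\eqref{eqn:addition.systems} rather than integrating the Bertrand--Darboux equation directly, transport Killing tensors via Proposition~\ref{prop:transformation.Killing.tensors} with scalar parts from~\eqref{eqn:WqU}, and determine the St\"ackel class by computing $R^2$ with computer algebra, reading the type off the normal forms of Table~\ref{tab:staeckel} and using the discriminant of the leading cubic to detect coalescence of roots. The stratified case analysis you sketch (homothetic points separately, then $\sin(\theta)=0$, then the locus~\eqref{eqn:stackel.nongenericity}, then the generic case) is exactly how the paper proceeds in Lemmas~\ref{prop:staeckel.classes.homothetic}--\ref{la:essential.systems}.

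There is, however, one genuine gap: Theorem~\ref{thm:addition} (and the addition~\eqref{eqn:addition.systems}) carries a hypothesis you never verify, namely that the Hamiltonians being added are \emph{projectively related} in the sense of Definition~\ref{def:related.Hamiltonians}, i.e.\ share a common projective vector potential $U$ up to constant factor. The potentials in Lemma~\ref{la:generator.potentials} come with a priori unrelated parameter sets $(c_i)$, $(a_i)$, $(b_i)$, and it is, as the paper stresses, ``a priori not clear whether two projectively equivalent metrics admit the same projective potential $U$''. Before~\eqref{eqn:2.sphere.potential} makes sense one must synchronize the parameters and fix normalizations so that $U[H^{(1)}]=U[H^{(2)}]=U[H^{(3)}]$; this is precisely the content of the paper's Lemma~\ref{la:generating.systems}, which produces the explicit $U$ of~\eqref{eqn:U.c} and the normalization factors $-\tfrac{2^{2/3}}{4}$ and $\tfrac{2^{1/3}}{8}$ in $V^{(2)},V^{(3)}$. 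Your non-degeneracy argument shows why this step cannot be skipped: if ``each $V^{(i)}$ contributed an independent $4$-parameter family'' to the added system, the admissible family would have dimension up to $12$, which is impossible since the maximum in dimension~2 is $n+2=4$; and ``activating one generator's parameters while leaving the others constant'' leaves the synchronized family, so the resulting function is in general \emph{not} an admissible potential for $g_\beta$ at all. The correct count is that the single synchronized family $(c_1,\dots,c_4)$, reconstructed on $g_\beta$ via Theorem~\ref{thm:potential.transformation} and~\eqref{eqn:dV.qU}, has linearly independent coefficient functions and hence dimension exactly~$4$. Finally, you assert but do not check functional independence of the resulting integrals; the paper settles this by noting that already the momentum-quadratic parts have independent differentials (a Jacobian computation, cf.~\cite{manno_2019}).
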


\noindent Figure~\ref{fig:orbits} illustrates Theorem~\ref{thm:main.application}. The gray (equator) and black curves show the orbits where the St\"ackel type is not generic. The black curve depicts points where the St\"ackel type is (21,2), except where it intersects with the equator. This intersection point has type (3,11), as have the north and south poles. The other points on the equator have St\"ackel type (21,0).

\begin{figure}
\includegraphics[scale=0.75]{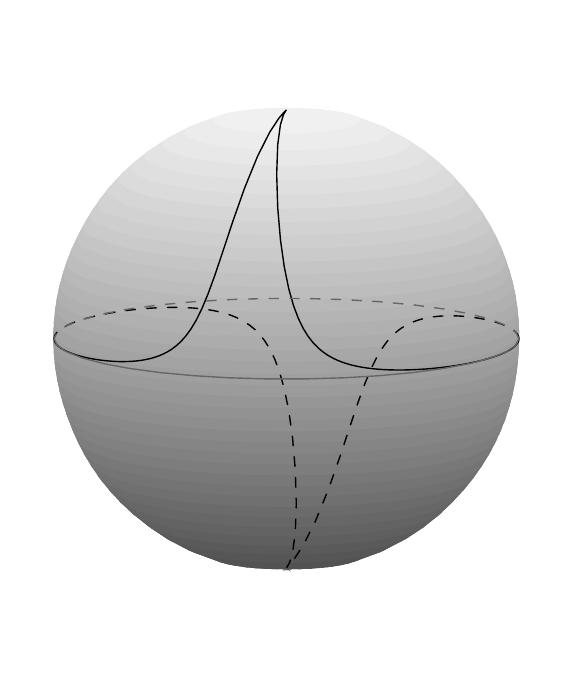}
\hspace{1cm}
\raisebox{0.4cm}{
\begin{tikzpicture}
    \begin{axis}[width=5cm, 
		 xtick={0, 1.571, 3.142, 4.712, 6.283},
		 xticklabels={$0$, $ $, $\pi$, $ $, $2\pi$},
		 ytick={-1.571,0,1.571},
		 yticklabels={$-\frac{\pi}{2}$,$ $,$\frac{\pi}{2}$},
		 tick pos=left,
		 xlabel={$\varphi$},
		 ylabel={$\theta$}, ylabel style={rotate=-90}]
        \def\azimuth{(x)}
        \def\elevation{(rad(atan(2^(2/3)/108*sin(deg(\azimuth))^3/cos(deg(\azimuth))^2)))}
        \addplot[domain=0:2*pi, samples=150] {\elevation};
    \end{axis}
\end{tikzpicture}
}
 \caption{The ``St\"ackel type degeneration'' within the classification space of metrics with one, essential projective symmetry. The gray circle is the orbit of St\"ackel type (21,0).
 The darker orbit is the subvariety of St\"ackel type (21,2), and degenerates into type (3,11) at the north and the south pole as well as at the intersections with the gray circle (=equator). The graph on the right visualises the function $\theta(\varphi)$ obtained from the second bracket in~\eqref{eqn:stackel.nongenericity}.}\label{fig:orbits}
\end{figure}

The theorem is going to be proven in the following section. Afterwards, in Section~\ref{sec:contractions}, we are going to comment on the interrelations of the St\"ackel classes appearing in Theorem~\ref{thm:main.application}.

\begin{remark}
Before we turn to the proof of Theorem~\ref{thm:main.application}, let us remark that in some sense the theorem covers all interesting cases of metrics with one, projective symmetry.
In fact, we are going to see:\smallskip

\emph{Metrics with a (non-trivial) homothetic symmetry are of constant curvature or are multiples of $g_1,g_2$ or $g_3$.}
\smallskip

\noindent Here is the proof:\smallskip

\noindent We assume degree of mobility at least~2, since otherwise we are in a trivial situation. Having exactly one (up to rescaling), homothetic projective vector field~$v$, we can follow the strategy in~\cite{matveev_2012}, i.e.\ we use that the Lie derivative of a metric~$g$ w.r.t.\ its homothetic vector field $v$ satisfies
\[
 \lie_vg = \lambda\,g\,,\quad\text{i.e.,}\ \ 
 \lie_v\beta = \mu\,\beta\,,\quad\forall\,\beta\in\solSp\,.
\]
We can solve this system of differential equations in a way analogous to the procedure in \cite{matveev_2012}, using the Dini-Bolsinov-Matveev-Pucacco theorem on normal forms of pairs of projectively equivalent metrics, see \cite{bolsinov_2009} which is an Appendix to \cite{matveev_2012}.
Therefore, see~\cite{matveev_2012}, either the metric is a multiple of $g_1,g_2$ or $g_3$, or there exists a 2-dimensional $\lie_v$-invariant subspace $\solSp_0\subset\solSp$ of the metrization space $\solSp$ such that $\lie_v|\solSp_0=\lambda$.
For Liouville metrics, the following Frobenius system is found, analogously to~\cite{matveev_2012},
\begin{align*}
X' &= 0
& Y' &= 0
&
(v_1)_x &= \frac12
& (v_1)_y &= 0
&
(v_2)_x &= 0
& (v_2)_y &= \frac12\,.
\end{align*}
After possibly a translation in $x$, $y$, and a rescaling of the coordinates, we obtain
\[
g_a = dx^2+dy^2\,,\quad
v = x\partial_x+y\partial_y
\]
Secondly, for Complex Liouville metrics, $v_1+i\,v_2 = -\frac32\,z+\text{constant}$ and
$h_z = 0$,
which yield (after obvious transformations)
\[
g_b = dz^2-d\bar{z}^2\,,\quad
v = z\partial_z+\bar{z}\partial_{\bar{z}}
\]
Lastly, in case of Jordan block normal forms for the metrics, the Frobenius system is equivalent to
\[
(v_1)_x = \frac32\,,\quad
v_1 = \frac32\,(x+Y)\,,\quad
0 = (v_1)_y = \frac32\,Y'
\]
and thus, after a translation in $x$, the metric $g$ is
\[
g_c = x\,dxdy\,,\quad\text{or}\quad
g_d = -2\,\frac{x\,dxdy}{y^3}+\frac{x^2}{y^4}\,dy^2\,.
\]
The metrics $g_a,g_b,g_c,g_d$ have constant curvature.
\end{remark}

\subsection{Proof of Theorem~\ref{thm:main.application}}\label{sec:homothetic.case}
The first part, i.e.\ Equations~\eqref{eqn:general.superintegrable}, are straightforward using our previously developed methods.
A priori, we have to integrate the Bertrand-Darboux equation~\eqref{eqn:bertrand.darboux} for any other metric of the projective class that we consider. This would, indeed, be a quite demanding task for a generic metric.
We can circumvent that issue using Theorems~\ref{thm:potential.transformation} and~\ref{thm:addition}. Indeed, instead of the explicit integration, we can exploit the generating systems determined in Theorem~\ref{thm:main.application}. This is enough to reconstruct completely and straightforwardly the admissible potential of any other metric of the projective class.
Specifically, this is achieved using Equations~\eqref{eqn:W.k} and~\eqref{eqn:WqU} together with that in Theorem~\ref{thm:potential.transformation}.
In \cite{manno_2018} it has been proven that the metrics with an essential projective symmetry admit freely superintegrable systems (i.e., without potential).
The admissible potentials for the metrics $g_1,g_2,g_3$ from~\eqref{eqn:supint.generators} have been found in Lemma~\ref{la:generator.potentials}.

\begin{lemma}[The generating systems]\label{la:generating.systems}
The metrics corresponding to~\eqref{eqn:matveev.system} give rise to projectively equivalent non-degenerate superintegrable systems with the projectively superintegrable system specified by the data $(\projcl,\solSp,U)$ where the projective potential is
\begin{equation}\label{eqn:U.c}
U = -\frac{c_3 (y^4 +3x^2) + c_2 (y^2 -x) + 2c_1 y}{(y^2 + x)^{\frac53}}\,\,\partial_x\,\,
-\frac{2c_3	y^3 + c_2 y + c_1}{(y^2 + x)^{\frac53}}\,\,\partial_y\,.
\end{equation}
The explicit potentials $V^{(1)},V^{(2)},V^{(3)}$ are (respectively for the generator metrics $g_1,g_2,g_3$ which provide a basis of~$\solSp$)
\begin{align*}
V^{(1)} &= -\frac{(y^2+3x)y\,c_3}{y^2+x}+\frac{y\,c_2}{y^2+x}+\frac{c_1}{y^2+x}+c_4 \\
V^{(2)} &= -\frac{2^{\frac23}}{4}\,
\left(\frac{(y^2-3x)y^2\,c_3}{y^2+x}+\frac{2y^2\,c_2}{y^2+x}+\frac{2y\,c_1}{y^2+x}+c_4\right) \\
V^{(3)} &= \frac{2^{\frac13}}{8}\,
\left(\frac{(y^2-3x)^3\,c_3}{y^2+x}+\frac{2(y^2-3x)^2\,c_2}{y^2+x}
+\frac{8(y^2-3x)y\,c_1}{y^2+x}-8c_1+c_4\right)\,,
\end{align*}
\end{lemma}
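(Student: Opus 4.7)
The plan is to leverage the constructive machinery developed in Section~\ref{sec:equivalent.systems}: the projective vector potential $U$ is, by Definition~\ref{def:U} and Theorem~\ref{thm:potential.transformation}, the intrinsic (metric-independent) object attached to a projectively equivalent family of superintegrable systems, and Theorem~\ref{thm:addition} then assembles the full family of potentials by linearity in the metrization space.

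Concretely, I would proceed as follows. First, starting from Lemma~\ref{la:generator.potentials}, I have the admissible non-degenerate potentials $V^{(1)},V^{(2)},V^{(3)}$ for the three generator metrics $g_1,g_2,g_3$ in~\eqref{eqn:supint.generators}, each depending on its own quadruple of free parameters $(c_i),(a_i),(b_i)$. For each generator, I would compute the projective vector potential directly from~\eqref{eqn:U}, using
\[
 U[H_\alpha] = |\det(g_\alpha)|^{\frac23}\,g_\alpha^{ij}\,\partial_j V^{(\alpha)}\,\partial_i\,,\qquad \alpha=1,2,3.
\]
The computation for $g_1 = (x+y^2)\,dxdy$ is immediate: $|\det(g_1)|^{2/3} = \tfrac{1}{2^{2/3}}(x+y^2)^{2/3}$, and the off-diagonal form of $g_1^{ij}$ permutes the coordinate derivatives of $V^{(1)}$, yielding a vector field of exactly the shape~\eqref{eqn:U.c} with $c_4$ absent (constants in $V$ drop out of any gradient).

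Next, for $g_2$ and $g_3$ (whose volume densities and inverse metrics are more cumbersome), the same direct calculation gives two further explicit vector fields $U[H_2]$ and $U[H_3]$, expressed in the parameters $(a_i)$ and $(b_i)$ respectively. The requirement that the three Hamiltonians be projectively related, in the sense of Definition~\ref{def:related.Hamiltonians}, amounts to $U[H_1]=U[H_2]=U[H_3]$ (equality, not merely proportionality, because we want Definition~\ref{def:equivalent.systems} to hold so that Theorem~\ref{thm:addition} applies verbatim). Matching coefficients of the monomials in $x,y$ in the three vector fields is a finite linear problem: it forces the $(a_i)$ and $(b_i)$ to be specific multiples of the $(c_i)$, with the additive constants $a_4,b_4$ left free (they contribute trivially to~$U$). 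The prefactors $-2^{2/3}/4$ in $V^{(2)}$ and $2^{1/3}/8$ in $V^{(3)}$ are precisely what this matching produces, and the common $U$ obtained is~\eqref{eqn:U.c}. I would then rename $a_4\to c_4$ and $b_4\to c_4$ to arrive at the stated forms.

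Finally, for the generic metric $g = \Psi^{-1}(\beta)$ with $\beta$ as in~\eqref{eqn:matveev.system}, Theorem~\ref{thm:addition} (and its many-term extension~\eqref{eqn:g.t.arbitrary}--\eqref{eqn:addition.systems}) applied to the triple $(g_1,V^{(1)})$, $(g_2,V^{(2)})$, $(g_3,V^{(3)})$ gives the potential as the same linear combination
\[
 V = \cos(\theta)\sin(\varphi)\,V^{(1)} + \cos(\theta)\cos(\varphi)\,V^{(2)} + \sin(\theta)\,V^{(3)},
\]
thereby establishing~\eqref{eqn:2.sphere.potential} and simultaneously showing that the combined system is non-degenerate (since each generator is non-degenerate by Lemma~\ref{la:generator.potentials} and the addition preserves the dimension of the potential parameter space). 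The main obstacle in the whole argument is purely bookkeeping: carrying out the gradient computations for $g_2$ and $g_3$ and then solving the linear matching conditions among $(a_i),(b_i),(c_i)$ without arithmetic slips. No conceptual difficulty arises, because the existence of projectively equivalent potentials is already guaranteed abstractly by Theorem~\ref{thm:potential.transformation}; the lemma only records the explicit scaling that makes all three systems project to the \emph{same} $U$.
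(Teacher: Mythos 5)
Your proposal follows essentially the same route as the paper's own proof: the paper likewise fixes the parameter correspondences among $V^{(1)},V^{(2)},V^{(3)}$ by comparing the parameters and their functional coefficients through the transformation behavior of Theorem~\ref{thm:potential.transformation} (equivalently, by demanding that all three generator Hamiltonians yield one common projective potential $U$, with the constants $(c_i)$ chosen to realize~\eqref{eqn:U.c}), and the extension to generic $\beta$ via linearity and Theorem~\ref{thm:addition} matches the surrounding argument for Theorem~\ref{thm:main.application}. One harmless arithmetic slip in your illustrative aside: for $g_1=(x+y^2)\,dxdy$ one has $|\det(g_1)|^{2/3}\propto(x+y^2)^{4/3}$, not $(x+y^2)^{2/3}$, which is exactly what is needed (after contracting with $g_1^{ij}\propto(x+y^2)^{-1}$ and the derivatives $\partial V^{(1)}\propto(x+y^2)^{-2}$) to reproduce the exponent $\tfrac{5}{3}$ in the denominator of~\eqref{eqn:U.c}; this does not affect the validity of the method.
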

\begin{proof}
It is a priori not clear whether two projectively equivalent metrics admit the same projective potential~$U$.
However, knowing the generating systems above, we can exploit our knowledge about the transformation behavior from Theorem~\ref{thm:potential.transformation}. It allows us to deduce the corresponding parameters in the three potentials by comparing the parameters and their respective functional coefficients.
The free constants are, of course, not unique, but we can choose the set $(c_i)$, for instance, for which we find the explicit expression~\eqref{eqn:U.c}.
%
\end{proof}

\noindent It only remains to prove the functional independence of the integrals following from this construction. But already the parts of the integrals quadratic in the momenta are functionally independent (this is easily checked using the Jacobian, cf.~\cite{manno_2019}). Therefore, also the full integrals are functionally independent.
\medskip

\noindent So only the St\"ackel classes for the superintegrable systems corresponding to each point of the 2-sphere need to be computed. We continue as follows: First, compute the St\"ackel type for the six exceptional points where the projective symmetry becomes homothetic. Then, we continue by first considering subsets of the classification space defined by using two of the generating systems. In a final step, the generic case is going to be considered.
Each of these intermediate considerations will, for sake of clarity, be presented as an independent lemma.
In order to keep notation short, we will mostly only refer to the point $\beta$ on the classifying 2-sphere, i.e.\ w.r.t.\ the representation~\eqref{eqn:2.sphere.beta}. Without explicitly saying it, we will then work with the natural Hamiltonian $H=g^{ij}_\beta p_ip_j+V_\beta$ where $g_\beta$ is the metric corresponding to~$\beta$ via~\eqref{eqn:a.g}, and where $V_\beta$ is the respective potential, corresponding with $\beta$, according to~\eqref{eqn:2.sphere.potential}.
We begin with the six exceptional cases (note that the St\"ackel type is unchanged if we reverse the sign, $\beta\mapsto-\beta$ and $V\mapsto-V$).
\begin{lemma}\label{prop:staeckel.classes.homothetic}
 The systems for $\beta=\beta_1$ and $\beta=\beta_3$ are of St\"ackel type (3,11). The system for $\beta=\beta_2$ is of St\"ackel type (21,0).
\end{lemma}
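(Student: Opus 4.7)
The plan is to treat each of the three cases $\beta=\beta_i$, $i\in\{1,2,3\}$, as an individual non-degenerate superintegrable system $(g_i,V^{(i)})$ in the sense of Lemma~\ref{la:generator.potentials}, and to determine its St\"ackel class directly from the cubic $R^2$ in the integrals of motion, matched against the normal forms of Table~\ref{tab:staeckel}. By the invariance principle recalled in Section~\ref{sec:introduction} (the leading cubic term and the form of the quadratic term in the integrals are preserved under St\"ackel transformations and under the residual linear freedom $I^{(\alpha)}\mapsto a_\alpha I^{(1)}+b_\alpha I^{(2)}+c_\alpha H+d_\alpha$), identifying the appropriate normal form suffices to read off the St\"ackel type.

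Concretely, for each fixed $i$ I would first solve the Bertrand--Darboux equation~\eqref{eqn:bertrand.darboux} for $g_i$ with a generic potential of the four-parameter family $V^{(i)}$. Non-degeneracy (stated in Lemma~\ref{la:generator.potentials}) ensures a two-dimensional space of Killing tensors (modulo the metric) compatible with the whole family. Picking a basis $K^{(1)},K^{(2)}$ of this space and integrating $dW^{(\alpha)}=\mathsf{K}^{(\alpha)}dV^{(i)}$, I obtain two functionally independent integrals $I^{(1)},I^{(2)}$. Then I would compute $R=\{I^{(1)},I^{(2)}\}$, square it, and rewrite $R^2$ as a polynomial in $H^{(i)},I^{(1)},I^{(2)}$ whose coefficients depend polynomially on the parameters $c_1,\ldots,c_4$ of the potential.

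Finally, by a linear change among the integrals (absorbing multiples of $H^{(i)}$ and constants) I would bring the expression $R^2$ into one of the normal forms of Table~\ref{tab:staeckel}. The expected outcome is: for $i=1$ and $i=3$, the leading cubic in the integrals reduces to $(I^{(1)})^3$ and is accompanied by a nonvanishing term of the form $f(H^{(i)},c_j)\,I^{(1)}I^{(2)}$, pinning down St\"ackel type $(3,11)$; for $i=2$, the leading cubic is $(I^{(1)})^2 I^{(2)}$ and the companion term $f(H^{(i)},c_j)\,(I^{(2)})^2$ vanishes identically, giving type $(21,0)$.

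The only genuine obstacle is the size of the intermediate expressions, especially for $g_3$ whose conformal factor $(y^2+x)/(3x-y^2)^6$ produces rather lengthy Poisson brackets. The classification itself, however, is mechanical once $R^2$ has been expressed in terms of $H^{(i)},I^{(1)},I^{(2)}$: one has to verify, after the linear change that isolates the leading cubic, whether the coefficient of the $(I^{(2)})^2$ term (or the $I^{(1)}I^{(2)}$ term) is nonzero, and this is a finite check. In practice I would carry out this bookkeeping with a computer algebra system.
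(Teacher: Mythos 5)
Your proposal is correct and takes essentially the same route as the paper: the paper's proof of this lemma is a one-line appeal to a ``straightforward computation using computer algebra,'' which, following the method recalled in Section~\ref{sec:introduction}, means exactly what you describe --- computing the cubic $R^2=\{I^{(1)},I^{(2)}\}^2$ as a polynomial in $H$, $I^{(1)}$, $I^{(2)}$ and matching it against the normal forms of Table~\ref{tab:staeckel}. You merely make explicit the intermediate steps (Bertrand--Darboux integration for the $W^{(\alpha)}$, the residual linear freedom among the integrals, and the finite check on the quadratic companion term) that the paper leaves implicit.
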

\begin{proof}
 The statement follows by a straightforward computation using computer algebra such as Maple\texttrademark\ or Sagemath\nocite{maple,sagemath}.
\end{proof}

\noindent Starting from the generating systems and Theorem~\ref{thm:main.application}, we can compute the quadratic algebra associated to the respective system, following the directions outlined in Section~\ref{sec:introduction}.
For practical purposes, it is most convenient to use~\eqref{eqn:general.superintegrable} together with the generators (instead of the more complicated general ones).
Moreover, we can use the addition operation defined in~\eqref{eqn:addition.systems}.
\begin{lemma}
 Let $\beta_{ij}(t_1,t_2)=t_1\beta_i+t_2\beta_j$ and consider the corresponding added systems, using the addition of systems as in~\eqref{eqn:addition.systems}.
 Then the systems for $\beta_{12}=\sin(\psi)\beta_1+\cos(\psi)\beta_2$ are of St\"ackel type (21,0) except when $\cos(\psi)=0$. 
 The systems $\beta_{i3}=\sin(\psi)\beta_i+\cos(\psi)\beta_3$, for $i\in\{1,2\}$, are of type (111,11) except when $\sin(\psi)\cos(\psi)=0$, i.e.\ $\psi\neq k\frac{\pi}{2}$ where $k\in\mathds{Z}$. The exceptions are the generator cases discussed in Proposition~\ref{prop:staeckel.classes.homothetic}.
\end{lemma}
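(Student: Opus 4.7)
The plan is to compute, for each one-parameter family, the Kalnins--Kress cubic $R^2=\{I^{(1)},I^{(2)}\}^2$ explicitly and then read off the St\"ackel type from Table~\ref{tab:staeckel} following the procedure recalled in Section~\ref{sec:introduction}. Since the Hamiltonians along each family are built by the addition rule \eqref{eqn:addition.systems}, the explicit form of the potential is very simple, so the whole computation stays linear in the generating data.

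Using Theorem~\ref{thm:addition} together with \eqref{eqn:addition.systems}, I would first write down the natural Hamiltonian $H$ with metric $\Psi^{-1}(\beta_{ij})$ and potential $t_1V^{(i)}+t_2V^{(j)}$, where the $V^{(\alpha)}$ are as in Lemma~\ref{la:generating.systems}. Two functionally independent quadratic integrals $I^{(1)},I^{(2)}$ arise from Proposition~\ref{prop:transformation.Killing.tensors}: their quadratic parts are obtained from the two generators of $\solSp$ distinct from $\beta_i$ and $\beta_j$, and their scalar parts are computed directly from \eqref{eqn:WqU} using the projective vector potential \eqref{eqn:U.c}. One then computes $R=\{I^{(1)},I^{(2)}\}$, squares it, and re-expresses $R^2$ as a polynomial in $H,I^{(1)},I^{(2)}$ with coefficients polynomial in the angle $\psi$ and in the free parameters $c_1,\dots,c_4$.

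To finish, I would read off the monomials of degree $3$ and $2$ in $(I^{(1)},I^{(2)})$ and match them against the normal forms in Table~\ref{tab:staeckel}, modulo the linear freedom $I^{(\alpha)}\mapsto aI^{(1)}+bI^{(2)}+cH+d$ recalled in the introduction. For $\beta_{12}(\psi)$ I expect only $(I^{(1)})^2I^{(2)}$ to appear in the cubic part and no $(I^{(2)})^2$ in the quadratic part, giving type (21,0); for $\beta_{i3}(\psi)$ I expect a genuine cubic that, after a suitable linear change of integrals, takes the form $I^{(1)}I^{(2)}(I^{(1)}+I^{(2)})$, giving type (111,11). The excluded parameter values $\cos(\psi)=0$ (for $\beta_{12}$) and $\sin(\psi)\cos(\psi)=0$ (for $\beta_{i3}$) are precisely those at which the relevant leading cubic coefficient vanishes: there the family collapses onto one of the generators $\beta_1,\beta_2,\beta_3$, whose St\"ackel types are fixed by Lemma~\ref{prop:staeckel.classes.homothetic}. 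The main obstacle is the sheer size of the symbolic computation --- eliminating the momenta in $R^2$ in favor of $(H,I^{(1)},I^{(2)})$ yields very large rational expressions in $(x,y,\psi,c_1,\dots,c_4)$, so the verification is realistically carried out by computer algebra, with care taken to first exploit the linear re-parametrization freedom so that the matching against the normal forms of Table~\ref{tab:staeckel} is unambiguous.
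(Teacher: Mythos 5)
Your proposal is correct and follows essentially the same route as the paper: both compute the cubic $R^2$ for the added systems and identify the St\"ackel type by matching the leading (cubic and quadratic) part against the normal forms of Table~\ref{tab:staeckel}, with the $\beta_{12}$ case handled by a direct computation following \cite{kress_2007} and the excluded angles recognized as collapses onto the generators of Lemma~\ref{prop:staeckel.classes.homothetic}. The only inessential difference is that for the $\beta_{i3}$ families the paper skips your explicit linear normal-form reduction: it just checks that the discriminant of the leading cubic in $I^{(1)},I^{(2)}$ vanishes exactly when $\sin(\psi)\cos(\psi)=0$, so away from those angles the three distinct roots already force type (111,11).
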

\begin{proof}
 The results for the systems $\beta_{12}$ are obtained straightforwardly following \cite{kress_2007}.
 For the remaining two families, we notice that the leading part cubic in the integrals of motion $I^{(1)}$ and $I^{(2)}$ has Discriminant $\Delta=0$ if and only if $\sin(\psi)\cos(\psi)=0$.
\end{proof}

\noindent Now, let us turn to the most generic case.
\begin{lemma}\label{la:essential.systems}
 Generically, the systems (again, addition is defined as in~\eqref{eqn:addition.systems}) for
 \[
  \beta_{123}=\cos(\theta)\cos(\varphi)\,\beta_1
	  +\cos(\theta)\sin(\varphi)\,\beta_2+\sin(\theta)\,\beta_3
 \]
 are of type (111,11).
 Degeneration occurs if and only if $\sin(\theta)\cos(\theta)=0$ or
 \begin{equation}\label{eqn:new.degenerate.case}
   \tan(\theta)=\frac{2^{2/3}}{108}\,\frac{\sin(\varphi)^3}{\cos(\varphi)^2}\,.
 \end{equation}
 In the latter case, the type is $(21,2)$ except when both $\sin(\theta)=0$ and $\sin(\varphi)=0$.
\end{lemma}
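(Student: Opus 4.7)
The plan is to apply the algorithm recalled in Section~\ref{sec:introduction} (following~\cite{kress_2007}). For the metric $g_\beta$ with $\beta=\beta_{123}$, choose integrals $I^{(1)}, I^{(2)}$ of the non-degenerate superintegrable Hamiltonian $H$ given by~\eqref{eqn:general.superintegrable}. The natural choice, justified by Proposition~\ref{prop:transformation.Killing.tensors} and Theorem~\ref{thm:addition}, is to take for the Killing tensor parts the transforms of the Killing tensors arising from two of the generators among $g_1,g_2,g_3$, with scalar parts reconstructed from the projective vector potential $U$ of Lemma~\ref{la:generating.systems} via~\eqref{eqn:WqU}. Compute $R=\{I^{(1)},I^{(2)}\}$, which is a cubic polynomial in the momenta, and expand $R^2$ as a polynomial in $H, I^{(1)}, I^{(2)}$ with coefficients polynomial in the coupling constants $c_1,c_2,c_3,c_4$, following~\cite{kalnins-II,kress_2007}.

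The St\"ackel type is then read off from the terms of $R^2$ of degree three and two in $(I^{(1)}, I^{(2)})$, according to Table~\ref{tab:staeckel}. The key object is the leading cubic form $C(I^{(1)},I^{(2)})$, whose factorization over the coupling-constant ring determines whether we are in type (111,11), (21,$\star$) or (3,$\star$). I would compute the discriminant of $C$ as a binary cubic in $(I^{(1)}, I^{(2)})$ and express it, after reduction and simplification, as a function of $(\theta,\varphi)$. The vanishing of this discriminant should factor as
\[
\Delta(\theta,\varphi)\;\dot=\;\sin(\theta)\cos(\theta)\cdot\Bigl(108\sin(\theta)\cos(\varphi)^2 - 2^{2/3}\cos(\theta)\sin(\varphi)^3\Bigr)^{\!k}
\]
for some exponent $k$, yielding exactly the two degeneration conditions $\sin(\theta)\cos(\theta)=0$ and~\eqref{eqn:new.degenerate.case}. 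In the generic case $\Delta\ne0$, the cubic has three distinct linear factors, so $C$ can be brought into the form $I^{(1)}I^{(2)}(I^{(1)}+I^{(2)})$ by an invertible linear change $I^{(i)}\to a_i I^{(1)}+b_i I^{(2)}+c_i H+d_i$, proving type (111,11).

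On the curve~\eqref{eqn:new.degenerate.case}, the cubic $C$ acquires a double linear factor (but not a triple one, which I would verify by checking that the cubic's second derivative does not vanish simultaneously). Hence after a linear change of $(I^{(1)}, I^{(2)})$ we can write $C=(I^{(1)})^2 I^{(2)}$. It remains to examine the quadratic-in-$(I^{(1)},I^{(2)})$ part of $R^2$ along this curve: if the coefficient of $(I^{(2)})^2$, taken modulo multiples of the Hamiltonian and of $I^{(1)}$ (which can be absorbed via the admissible redefinitions), is nonzero, we are in type (21,2); otherwise in type (21,0). A direct computation should show that this coefficient survives except exactly where $\sin(\theta)=0$ and $\sin(\varphi)=0$, namely at $\beta=\pm\beta_1$, which is the homothetic exception of type (3,11) already handled in Lemma~\ref{prop:staeckel.classes.homothetic}. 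Putting these steps together yields the claim.

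The main obstacle is computational: the metric coefficients in $g_\beta$ are rational functions of $(x,y,\theta,\varphi)$ arising from~\eqref{eqn:g.t.arbitrary} with three summands, and the corresponding integrals $I^{(\alpha)}$ and their Poisson bracket are accordingly unwieldy. The decisive simplification is that only the cubic and quadratic leading parts in $(I^{(1)},I^{(2)})$ of $R^2$ matter, so one can reduce the calculation modulo lower-order terms from the outset; this, combined with the linearity of the potential dependence established in Theorem~\ref{thm:addition}, makes the problem tractable by computer algebra. The nontrivial algebraic step is to reduce the condition $\Delta=0$ to the clean trigonometric identity~\eqref{eqn:new.degenerate.case}, which requires eliminating the $(x,y)$-dependence by using the relations among $H, I^{(1)}, I^{(2)}$ coming from the quadratic algebra of the system.
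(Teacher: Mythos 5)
Your proposal is correct and takes essentially the same route as the paper: the paper likewise runs the normal-form algorithm of~\cite{kress_2007} by computer algebra, using the discriminant of the leading cubic of $R^2$ in $I^{(1)},I^{(2)}$ to isolate the degenerations (three distinct roots giving type (111,11) generically) and then settling the subtype on the degenerate locus via the quadratic part. The only real difference is in bookkeeping for the degenerate case: where you verify the $(I^{(2)})^2$-coefficient along the curve in $(\theta,\varphi)$, the paper passes to homogeneous coordinates $\beta=t_1\beta_1+t_2\beta_2+t_3\beta_3$, in which~\eqref{eqn:new.degenerate.case} becomes $(108\,t_1^2t_3-2^{\frac23}t_2^3)\,t_3=0$, and exploits the scale-invariance of the St\"ackel type to normalize $t_3=1$, $t_1=\pm1$, $t_2=\sqrt[3]{2^{\frac13}\,54}$, so the (21,2) verification reduces to a single computation at one representative instead of a one-parameter family.
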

\begin{proof}
 The first part of the proof is straightforward with computer algebra, because we can use the cubic discriminant to identify cases where the type is of the form $(3,\ast)$ or $(21,*)$ with $*$ indicating that we only consider the leading part of the cubic at this step. If the leading part admits three distinct roots, we are done since this implies already the type (111,11).
 Next, in order to prove that cases~\eqref{eqn:new.degenerate.case} are of type (21,0), we use a new representation.
 Letting $\beta = t_1\,\beta_1 + t_2\,\beta_2 + t_3\,\beta_3$,
 the condition for degenerate cases translates into
 \begin{equation}\label{eqn:condition.rewritten}
  (108\,t_1^2\,t_3-2^{\frac23}\,t_2^3)\,t_3 = 0\,.
 \end{equation}
 If $t_3=0$ we end up in the $\langle \beta_1,\beta_2\rangle$-plane, so let $t_3\ne0$. We may then choose a representative with $t_3=1$ and $t_1=\pm1$ as this does not change the St\"ackel type.
 From~\eqref{eqn:condition.rewritten} we infer
 $t_2=\sqrt[3]{2^{\frac13}\,54}$,
 and then it is straightforward to show that the resulting system is of type (21,2).
\end{proof}

\subsection{Contractions}\label{sec:contractions}
We conclude this section by a review of the interrelations of the St\"ackel classes we have found to be realized in Theorem~\ref{thm:main.application}. This interrelation is provided by what is known as (B\^ocher or \.In\"on\"u-Wigner) contractions \cite{kalnins-I,kalnins_2013,capel_2015}.
These are singular limits of families of coordinate transformations on a Lie group or its Lie algebra. Under certain conditions, the limit of such singular transformations yields a new superintegrable system (the transformed system is not necessarily defined on the same space as the initial one), see~\cite{inonu_1953,saletan_1961}.
The explicit contractions that relate systems belonging to one of the St\"ackel classes appearing in Theorem~\ref{thm:main.application} can be found in~\cite{kalnins_2014}.

\begin{example}
Consider the special orthogonal group $SO(3)$. In coordinates, the structure relations on the Lie algebra are given by the antisymmetric Levi-Civita symbol, i.e.\ $[\ell_i,\ell_j] = \epsilon^{ijk}\ell_k$ where $\ell_i$ are the angular momentum operators. Now transform
\[
 \ell_1\to\hat\ell_1=\varepsilon\ell_1\,,\quad
 \ell_2\to\hat\ell_2=\varepsilon\ell_2\,,\quad
 \ell_3\to\hat\ell_3=\ell_3\,.
\]
This becomes singular for $\varepsilon=0$. But it still has a well-defined limit on the level of structure constants yielding in the limit $\varepsilon\to0$ the 2D Euclidean group $E(2)$.
While defined on the level of the Lie algebra, a realization on the level of coordinates is given in~\cite{kalnins_2014} as follows: Let $SO(3)$ act on the 2-sphere $\mathbb{S}^2\subset\mathbb{R}^3$ with $\ell_1=x_2p_3-x_3p_2$ etc.\ and the Hamiltonian given by $H=\sum_i\ell_i^2$. Restriction to the sphere means that we have $\sum_ix_i^2=1$ and $\sum_ix_ip_i=0$. The contraction from $SO(3)$ to $E(2)$ can then be implemented on the coordinates as
\[
 y_1\to\frac{x_1}{\varepsilon}\,,\quad
 y_2\to\frac{x_2}{\varepsilon}\,,\quad
 y_3\to1\,.
\]
\end{example}

\noindent In the theory of superintegrability, contractions are interesting in at least two respects. First, for non-degenerate second-order 2-dimensional superintegrable systems, there is a generic system on $\mathbb{S}^2$ (referred to as [S9]), from which all other superintegrable systems on $\mathbb{S}^2$ and $\mathbb{E}^2$ can be obtained by subsequent singular limits, via contractions \cite{kalnins_2013} (an analogous result exists in dimension~3 \cite{capel_2015}; the phenomenon was first observed by B{\^o}cher \cite{bocher_1894}).
%
Second, taking contractions as directed transformations of superintegrable systems (or rather, their St\"ackel classes), a hierarchy of St\"ackel classes can be written down \cite{kalnins-I,kalnins_2013,capel_2015}, with (111,11) being the uppermost node of the resulting graph, as established in~\cite{kalnins_2013}.
This graph can, in fact, be related to contractions of hypergeometric orthogonal polynomials in the Askey scheme~\cite{kalnins_2013}, revealing a link between special functions and superintegrable systems.

Figure~\ref{fig:contractions} shows the graph of contractions between non-degenerate 2-dimensional St\"ackel equivalence classes. It shows that the systems appearing in Theorem~\ref{thm:main.application} form a subgraph, which relate to the curves, and their intersections, as shown in Figure~\ref{fig:orbits}. Specifically, the black and gray curves in Figure~\ref{fig:orbits} correspond with the nodes (21,2) and (21,0) in Figure~\ref{fig:contractions}, respectively; except for their intersection points and the north and south poles, which correspond to the node (3,11).
Contractions can thus be interpreted in terms of subvariety inclusions, if we adopt a description as in~\eqref{eqn:condition.rewritten} within the proof of Lemma~\ref{la:essential.systems}.

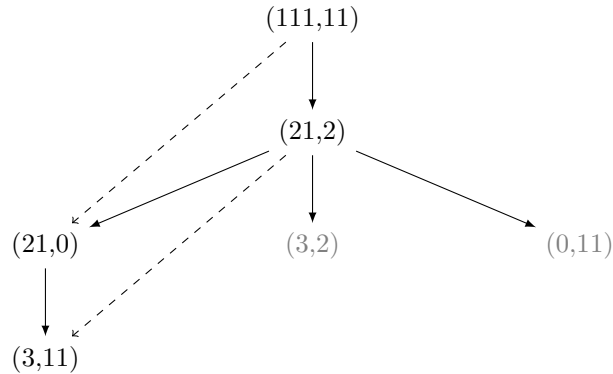
\begin{figure}
\begin{center}
 \begin{tikzpicture}[sibling distance=10em,edge from parent/.style={draw,-latex},
  every node/.style = {shape=rectangle, rounded corners, align=center,top color=white,bottom color=white}]]
  \node (111-11) {(111,11)}
    child { node (21-2) {(21,2)}
      child { node (21-0) {(21,0)}
        child { node (3-11) {(3,11)} }
      }
      child { node[color=gray] {(3,2)} }
      child { node[color=gray] {(0,11)} }
    };
    \draw[dashed,->] (111-11) -- (21-0);
    \draw[dashed,->] (21-2) -- (3-11);
 \end{tikzpicture}
 \medskip
 \caption{The hierarchy of non-degenerate superintegrable systems in dimension~2, see~\cite{kalnins_2013}. Gray indicates St\"ackel classes not appearing in Theorem~\ref{thm:main.application}. Solid arrows are taken from~\cite{kalnins_2013}, while dashed arrows indicate further degenerations along curves in Figure~\ref{fig:orbits}.}\label{fig:contractions}
\end{center}
\end{figure}

\section{Discussion and Conclusion}\label{sec:conclusions}
In this paper we have studied (second-order) superintegrable systems from the viewpoint of projective differential geometry, which provided us with a concept of equivalence for superintegrable systems on 2-dimensional (pseudo-)Riemannian manifolds that share the same geodesics up to reparametrization.
As we have seen this concept is in many respects similar to the that of St\"ackel equivalence, which may be viewed as its conformal counterpart.

A number of examples have been presented to illustrate how the techniques offered by this approach can be exploited, such as for the construction of superintegrable systems and for verifying their projective equivalence. Particularly, we have found a formula for the potentials of simultaneouly St\"ackel and projectively equivalent systems, see~\eqref{eqn:potential.formula}.
Finally, as a concrete application, we have classified (up to St\"ackel equivalence) all superintegrable systems whose underlying metric admits one, non-trivial (i.e., essential) projective symmetry. Figure~\ref{fig:orbits} provides a concise geometric interpretation for this classification in terms of subvarieties on the ambient space $\mathbb{R}^3\supset\mathbb{S}^2$.

Directions for further research include, for instance, a generalization to higher dimensions, and a combination with efforts to classify superintegrable systems in terms of algebraic varieties. In dimension 2, for flat geometries, such a classification already exists \cite{kress_2019}, and a generalisation to higher dimension is currently underway. In particular, the algebraic-geometric classification space should be expected to be endowed not only with the action of the isometry group, but also the action of projective and conformal transformations.

\section*{Acknowledgement}
The author would like to thank Rod Gover and Jonathan Kress for enlightening discussions on superintegrability, St\"ackel transform and  their relation with conformal and (metric) projective geometry, as well as Vladimir Matveev and Gianni Manno for sharing their knowledge on projective differential geometry and metrizability. Special thanks go to Gianni Manno for proofreading the manuscript and for several helpful suggestions.
Moreover, the author is grateful towards Konrad Sch\"obel, Joshua Capel and Galliano Valent for valuable comments and remarks.
%
The author is a postdoc research fellow of Deutsche Forschungsgemeinschaft (DFG) and acknowledges travel support from DFG, the University of Auckland and Istituto Nazionale di Alta Matematica (INdAM). The author thanks FSU Jena, the University of Pavia and the University of Auckland for hospitality.\medskip

\noindent Funded by Deutsche Forschungsgemeinschaft (DFG, German Research Foundation) Project number 353063958

\nocite{*}

\sloppy
\printbibliography

\end{document}